\numberwithin{equation}{section}
\def\hangbox to #1 #2{\vskip3pt\hangindent #1\noindent \hbox to #1{#2}$\!\!$}
\theoremstyle{plain}
\newtheorem{theorem}{Theorem}[section]
\newtheorem{problem}[theorem]{Problem}
\theoremstyle{definition}
\newtheorem{remark}[theorem]{Remark}
\newtheorem{definition}[theorem]{Definition}
\DeclareSymbolFont{bbold}{U}{bbold}{m}{n}
\DeclareSymbolFontAlphabet{\mathbbold}{bbold}
\def\N{{\mathbb N}}
\def\R{{\mathbb R}}
\def\sfrac#1#2{\kern.1em\raise.5ex\hbox{$#1$}
        \kern-.1em/\kern-.05em\lower.25ex\hbox{$#2$}}
\def\vp{\varepsilon}
\def\dim{\operatorname{dim}}
\newcommand{\sign}{\text{\rm sign}}
\newcommand{\fw}{\text{\fw}}
\newcommand{\be}{\begin{equation}}
\newcommand{\ee}{\end{equation}}
\title[The Cahill-Casazza-Daubechies problem on H\"older stable phase retrieval]{The Cahill-Casazza-Daubechies problem on H\"older stable phase retrieval}
\author{Daniel\ Freeman}
\address{Department of Mathematics and Statistics\\
St Louis University\\
St Louis, MO   USA} \email{daniel.freeman@slu.edu}
\author{Mitchell~A.\ Taylor}
\address{Department of Mathematics\\
ETH Z\"urich, Ramistrasse 101, 8092 Z\"urich, Switzerland.
} \email{mitchell.taylor@math.ethz.ch}
\subjclass[2020]{42C15, 46C05, 94A15} 
\keywords{Stable phase retrieval; nonlinear inverse problem; frame theory.}
\thanks{ The first author was supported by the National Science Foundation grant DMS-2154931. }
\begin{document}
\begin{abstract}
Phase retrieval using a frame for a finite-dimensional Hilbert space is known to always be Lipschitz stable.  However, phase retrieval using a frame or a continuous frame for an infinite-dimensional Hilbert space is always unstable.  In order to bridge the gap between the finite and infinite dimensional phenomena,
Cahill-Casazza-Daubechies (\textit{Trans.~Amer.~Math.~Soc.}~2016) gave a construction of a family of nonlinear subsets of an infinite-dimensional Hilbert space where phase retrieval could be performed with a H\"older stability estimate.  They then posed the question of whether these subsets satisfied Lipschitz stable phase retrieval.  We solve this problem both by giving examples which fail Lipschitz stability and by giving examples which satisfy Lipschitz stability.

\end{abstract}

\maketitle
\allowdisplaybreaks

\section{Phase retrieval and H\"older stability}
A \emph{frame} for a separable Hilbert space $\mathcal{H}$ is a sequence of vectors $\Phi=(\varphi_n)_{n\in I}$ in $\mathcal{H}$ for which there exists uniform constants $A,B>0$ satisfying
\begin{equation}\label{E:frame inequality}
    A\|f\|^2\leq \sum_{n\in 
    I}|\langle f,\varphi_n\rangle|^2\leq B\|f\|^2\hspace{1cm}\textrm{ for all }f\in \mathcal{H}.
\end{equation}
The {\em analysis operator} of the frame $\Phi$ is the map $T_\Phi:\mathcal{H}\rightarrow\ell_2(I)$ given by
$$T_\Phi(f)=(\langle f,\varphi_n\rangle)_{n\in I}.$$
By the frame inequality \eqref{E:frame inequality}, $T_\Phi$ is  an isomorphic embedding of $\mathcal{H}$ into $\ell_2(I)$.    The problem of \emph{phase retrieval with a frame} consists of recovering an unknown function $f\in \mathcal{H}$ from the set of intensity measurements $$|T_\Phi(f)|=\left(|\langle f,\varphi_n\rangle |\right)_{n\in I}\in \ell_2(I).$$ 
Since $|T_\Phi(\alpha f)|=|T_\Phi(f)|$ for all unimodular scalars $\alpha$, $|T_\Phi(f)|$ cannot distinguish $f$ from $\alpha f$. For this reason,  we define the equivalence relation $\sim$ on $\mathcal{H}$ by declaring that $f\sim g$ if $f=\alpha g$ for some unimodular scalar $\alpha$.  We  say that the frame \emph{$\Phi$ does phase retrieval} if  $|T_\Phi(f)|$ uniquely determines $f\in \mathcal{H}/\sim$.  In other words, the frame $\Phi$ does phase retrieval if the nonlinear mapping $$\mathcal{A}_\Phi: \mathcal{H}/\sim\to \ell_2(I),\hspace{5mm}\mathcal{A}_\Phi(f):=\left|T_\Phi(f)\right|$$
is injective. Phase retrieval problems arise in many applications, including coherent diffraction imaging \cite{miao1999extending}, transmission electron microscopy \cite{kirkland1998advanced} and speech recognition \cite{claudio1999speech}.
  \medskip

To ensure that the solution to a nonlinear inverse problem is reliable, it is imperative to understand the stability properties of the recovery map. For phase retrieval, it is known that the recovery map $\mathcal{A}_\Phi^{-1}$ is continuous, whenever it exists \cite{MR3656501}. However,  in practice, one needs quantitative stability estimates. We say that a frame $\Phi$ of a Hilbert space $\mathcal{H}$ does \emph{stable phase retrieval} if there exists a constant $C\geq 1$ such that for all $f,g\in \mathcal{H}$,
\begin{equation}\label{0.4}
    \inf_{|\alpha|=1}\|f-\alpha g\|_\mathcal{H}\leq C\|\mathcal{A}_\Phi(f)-\mathcal{A}_\Phi(g)\|_{\ell_2(I)}.
\end{equation}
Note that $\inf_{|\alpha|=1}\|f-\alpha g\|_\mathcal{H}$ is the distance between $[f]_\sim$ and $[g]_\sim$ in the quotient metric of $H/\!\sim$.  With this in mind, the inequality \eqref{0.4} states that the recovery map $\mathcal{A}_\Phi^{-1}$ exists and is $C$-Lipschitz continuous. If the Hilbert space $\mathcal{H}$ is finite-dimensional, then  any frame for $\mathcal{H}$ which does phase retrieval must do it stably \cite{MR3202304,MR3554699}. However, in every known explicit construction of a frame consisting of a number of vectors proportional to the dimension of the Hilbert space, the associated stability constant for phase retrieval increases to infinity as the dimension increases. Moreover, although a generic frame of $2n-1$ vectors for real $\ell_2^n$ does phase retrieval \cite{MR2224902}, Balan and Wang \cite{MR3323113} proved that all such solutions to the phase retrieval problem exhibit severe instabilities as the dimension $n$ tends to infinity. On the other hand,  many distributions for random frames yield dimension-independent stability bounds with high probability if one allows for a number of vectors bounded by a constant multiple above the optimal algebraic solutions \cite{MR3260258, MR3069958,MR3175089,MR3746047,KS}.
 \medskip

When the Hilbert space $\mathcal{H}$ is infinite-dimensional, the situation is entirely different. Indeed, although many important discrete and continuous frames for infinite-dimensional spaces perform phase retrieval, \emph{no} frame  for an infinite-dimensional $\mathcal{H}$ can perform \emph{stable} phase retrieval \cite{MR3554699}. The reason for this is that in any infinite-dimensional linear subspace $V$ of $\ell_2$  one can use a ``gliding hump" argument to find, for every $\epsilon>0$, normalized vectors $x=(a_n)$ and $y=(b_n)$ in $V$ so that
\begin{equation}\label{gliding hump}
    x\approx_\epsilon (a_1,\dots, a_N,0,0,\dots), \hspace{5mm}     y\approx_\epsilon (0,\dots, 0,b_{N+1},b_{N+2},\dots).
\end{equation}
One then observes that the vectors $x+y, \ x-y\in V$  are far from multiples of each other, yet satisfy $|x+y|\approx_\epsilon |x-y|$. In particular, taking $V:=T_\Phi(\mathcal{H})\subseteq \ell_2$, setting $x=T_\Phi(f), \ y=T_\Phi(g)$ and using the fact that $T_\Phi$ is an isomorphic embedding, one deduces a lack of stability in the phase recovery process for the frame $\Phi$.  A similar argument is given in \cite{MR3656501} to prove that no continuous frame for an infinite-dimensional Hilbert space does stable phase retrieval.
 \medskip

The above results suggest a dichotomy between finite and infinite dimensional phase retrieval problems. As phase retrieval in infinite dimensions is an important problem, there have been multiple different methods of bridging the gap to obtain some notion of stability in infinite dimensions.  One method is to use a different ambiguity than just considering two vectors as equivalent if they are equal up to a global phase.  In \cite{MR3989716,MR4298599,MR3977121,MR4188345,grohs2021stable}, circumstances are given where it is possible to partition the domain of a function where it is uniformly large into disconnected sets so that the function $f$ may be stably recovered from $|f|$ up to the ambiguity of having a different phase on each disconnected piece.  This is good enough for some applications, as, for example, a piece of audio that includes two parts $f$ and $g$ which are separated by an interval of silence sounds the same as if you changed the relative sign between $f$ and $g$.  
In \cite[Theorem 2.7]{MR3554699}, Cahill, Casazza, and Daubechies presented a different method of bridging the gap between finite and infinite dimensions by establishing H\"older stability of the phase recovery map for certain nonlinear subsets of $\ell_2$ consisting of functions which are ``well-approximated" by a sequence of finite-dimensional linear subspaces doing phase retrieval. More precisely, they proved the following theorem.
\begin{theorem}[Theorem 2.7 of \cite{MR3554699}]\label{CCD Thm}
Let $\mathcal{H}$ be an infinite-dimensional separable Hilbert space and let $V_1\subseteq V_2\subseteq V_3...$ be a nested sequence of finite-dimensional subspaces of $\mathcal{H}$. Let $\Phi=(\varphi_n)_{n\in \mathbb{N}}$ be a frame for $\mathcal{H}$ with frame bounds $0<A\leq B<\infty$ and suppose that there is an increasing function $G(m)$ with $\lim_{m\to\infty}G(m)=\infty$ such that for every $m$,
\begin{equation}\label{E:CCD1}
\inf_{|\alpha|=1}\|f-\alpha g\|\leq G(m)\|\mathcal{A}_\Phi(f)-\mathcal{A}_\Phi(g)\|\hspace{1cm}\textrm{ for all }f,g\in V_m.
\end{equation}
For $\gamma>1$ and $R>0$ define 
\begin{equation}\label{E:CCD2}
\mathcal{B}_\gamma(R)=\{f\in \mathcal{H} : \|f-P_mf\|\leq G(m+1)^{-\gamma}R\|f\|\ \text{for every}\ m\in \mathbb{N}\},
\end{equation}
where $P_m$ denotes the orthogonal projection onto $V_m$. Then there exists a constant $C>0$ depending only on $B$, $R$, $\gamma$ and $G(1)$ such that for all $f,g\in \mathcal{B}_\gamma(R)$,
\begin{equation}\label{conclusion}
    \inf_{|\alpha|=1}\|f-\alpha g\|_\mathcal{H}\leq C\left(\|f\|_\mathcal{H}+\|g\|_\mathcal{H}\right)^\frac{1}{\gamma}\|\mathcal{A}_\Phi(f)-\mathcal{A}_\Phi(g)\|_{\ell_2}^{\frac{\gamma-1}{\gamma}}.
\end{equation}
\end{theorem}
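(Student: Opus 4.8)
The plan is to produce, for \emph{every} index $m$, a two-term upper bound on $\inf_{|\alpha|=1}\|f-\alpha g\|$: one term coming from the crude ``truncation'' estimate (which decays in $m$ because $f,g\in\mathcal{B}_\gamma(R)$) and one term coming from the finite-dimensional stability \eqref{E:CCD1} applied to $P_mf,P_mg\in V_m$ (which grows in $m$ because $G(m)\to\infty$), and then to choose $m$ optimally. \emph{First, normalization.} Both sides of \eqref{conclusion} are positively homogeneous of degree $1$ jointly in $(f,g)$ — here one uses that $\mathcal{A}_\Phi(tf)=t\,\mathcal{A}_\Phi(f)$ for $t>0$ — and $\mathcal{B}_\gamma(R)$ is a cone, since its defining condition \eqref{E:CCD2} is homogeneous of degree $1$ on both sides. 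Hence it suffices to prove \eqref{conclusion} when $\|f\|+\|g\|=1$, the case $f=g=0$ being trivial. Write $\delta:=\|\mathcal{A}_\Phi(f)-\mathcal{A}_\Phi(g)\|_{\ell_2}$ and record the crude bound $\inf_{|\alpha|=1}\|f-\alpha g\|\le\|f\|+\|g\|=1$.

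\emph{The per-$m$ estimate.} Fix $m\in\mathbb N$ and apply \eqref{E:CCD1} to $P_mf,P_mg\in V_m$. Using the pointwise inequality $\big||x_n|-|y_n|\big|\le|x_n-y_n|$, the triangle inequality in $\ell_2$, the frame upper bound $\|T_\Phi h\|\le\sqrt B\,\|h\|$, and the membership $f,g\in\mathcal{B}_\gamma(R)$, one gets
\[
\|\mathcal{A}_\Phi(P_mf)-\mathcal{A}_\Phi(P_mg)\|\le\|T_\Phi(f-P_mf)\|+\delta+\|T_\Phi(g-P_mg)\|\le\sqrt B\,R\,G(m+1)^{-\gamma}+\delta.
\]
Combining this with \eqref{E:CCD1}, with the inequality $\inf_{|\alpha|=1}\|f-\alpha g\|\le\|f-P_mf\|+\|g-P_mg\|+\inf_{|\alpha|=1}\|P_mf-\alpha P_mg\|$, with $\|f-P_mf\|+\|g-P_mg\|\le R\,G(m+1)^{-\gamma}$, and using that $G$ is increasing (so $G(m)\le G(m+1)$ and $G(m+1)^{-1}\le G(1)^{-1}$), one arrives at
\[
\inf_{|\alpha|=1}\|f-\alpha g\|\le K\,G(m+1)^{1-\gamma}+G(m)\,\delta\qquad\text{for every }m\in\mathbb N,
\]
where $K:=R\big(G(1)^{-1}+\sqrt B\big)$ depends only on $B,R,G(1)$.

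\emph{Optimizing in $m$.} Still assuming $\|f\|+\|g\|=1$: if $\delta\ge G(1)^{-\gamma}$, then the crude bound already gives $\inf_{|\alpha|=1}\|f-\alpha g\|\le1\le G(1)^{\gamma-1}\,\delta^{(\gamma-1)/\gamma}$. If instead $\delta<G(1)^{-\gamma}$, then $G(1)<\delta^{-1/\gamma}$, so (since $G(m)\to\infty$) we may take $m$ to be the largest integer with $G(m)\le\delta^{-1/\gamma}$; then $G(m)\,\delta\le\delta^{(\gamma-1)/\gamma}$, while $G(m+1)>\delta^{-1/\gamma}$ together with $1-\gamma<0$ forces $G(m+1)^{1-\gamma}\le\delta^{(\gamma-1)/\gamma}$, so the per-$m$ estimate yields $\inf_{|\alpha|=1}\|f-\alpha g\|\le(K+1)\,\delta^{(\gamma-1)/\gamma}$. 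In either case $\inf_{|\alpha|=1}\|f-\alpha g\|\le C\,\delta^{(\gamma-1)/\gamma}$ with $C:=\max\{K+1,\,G(1)^{\gamma-1}\}$. Finally, for general $f,g\in\mathcal{B}_\gamma(R)$ not both zero, apply this to $f/(\|f\|+\|g\|)$ and $g/(\|f\|+\|g\|)$ — which lie in $\mathcal{B}_\gamma(R)$ since it is a cone — and multiply through by $\|f\|+\|g\|$ using the homogeneity of $\mathcal{A}_\Phi$; since $1-\tfrac{\gamma-1}{\gamma}=\tfrac1\gamma$, this is precisely \eqref{conclusion}, and $C$ depends only on $B,R,\gamma,G(1)$.

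\emph{Main obstacle.} The triangle-inequality manipulations for $\mathcal{A}_\Phi$ and the bookkeeping of constants are routine. The one genuinely delicate step is the balancing of the two terms: $G$ is an arbitrary increasing function diverging to $\infty$ with no regularity, so one cannot literally solve $G(m+1)^{1-\gamma}=G(m)\,\delta$, and one must instead use the ``largest $m$ below the threshold $\delta^{-1/\gamma}$'' device together with a separate treatment of large $\delta$. I expect this to be the crux of the argument, and it is exactly where the exponent $(\gamma-1)/\gamma$ — and the requirement $\gamma>1$, which makes $G(m+1)^{1-\gamma}\to0$ — come from.
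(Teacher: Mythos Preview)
The paper does not contain a proof of this statement: it is quoted verbatim as Theorem~2.7 of Cahill--Casazza--Daubechies and serves only as background for the paper's main results. So there is no in-paper proof to compare against.

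That said, your argument is correct and is the natural one. The per-$m$ estimate is derived cleanly: the chain
\[
\|\mathcal{A}_\Phi(P_mf)-\mathcal{A}_\Phi(P_mg)\|\le \|T_\Phi(f-P_mf)\|+\delta+\|T_\Phi(g-P_mg)\|
\]
follows from the reverse triangle inequality coordinatewise and the upper frame bound, and the passage from $\inf_{|\alpha|=1}\|P_mf-\alpha P_mg\|$ to $\inf_{|\alpha|=1}\|f-\alpha g\|$ is the obvious triangle inequality. The balancing step is handled correctly: since $G$ has no regularity beyond monotonicity and divergence, one cannot solve for $m$ exactly, and your ``largest $m$ with $G(m)\le\delta^{-1/\gamma}$'' together with the separate large-$\delta$ case is exactly the right device; it is also where the exponent $(\gamma-1)/\gamma$ and the hypothesis $\gamma>1$ enter. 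One small omission worth making explicit: when $\delta=0$ the threshold argument is vacuous, but then the per-$m$ bound gives $\inf_{|\alpha|=1}\|f-\alpha g\|\le K\,G(m+1)^{1-\gamma}\to 0$, so both sides of \eqref{conclusion} vanish. With that one-line addition the proof is complete, and the constant $C=\max\{K+1,\,G(1)^{\gamma-1}\}$ with $K=R(G(1)^{-1}+\sqrt{B})$ visibly depends only on $B,R,\gamma,G(1)$.
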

 Note that the condition \eqref{conclusion} states that the phase recovery map associated to $\mathcal{B}_\gamma(R)$ is H\"older continuous on the unit ball, whereas \eqref{0.4} requires that this map be Lipschitz continuous. In \cite[Remark 2.8]{MR3554699}, Cahill, Casazza and Daubechies posed the following question.
 \begin{problem}\label{Problem 1}
     Do the sets $\mathcal{B}_\gamma(R)$ in \Cref{CCD Thm} do Lipschitz stable phase retrieval, i.e., does there exist a constant $C\geq 1$ such that for all $f,g\in \mathcal{B}_\gamma(R)$,
     \begin{equation*}
    \inf_{|\alpha|=1}\|f-\alpha g\|_\mathcal{H}\leq C\|\mathcal{A}_\Phi(f)-\mathcal{A}_\Phi(g)\|_{\ell_2}?
\end{equation*}
 \end{problem}
The purpose of this paper is to answer this question. In Section \ref{S:Counter} we produce a counterexample by providing a frame $\Phi$ which does phase retrieval for an infinite dimensional Hilbert space $\mathcal{H}$ and a sequence of finite dimensional subspaces $V_1\subseteq V_2\subseteq...$ such that for all $\gamma>1$, all choices of $(G(m))_{m=1}^\infty$, and all $R>0$ we have that $\Phi$ fails to do Lipschitz stable phase retrieval on $\mathcal{B}_\gamma(R)$.  On the other hand, in Sections \ref{S:L} and \ref{S:M} we provide constructions in both real and complex infinite-dimensional Hilbert spaces where phase retrieval is Lipschitz stable on the set $\mathcal{B}_\gamma(R)$. Thus, the question of Lipschitz stability on a subset is much more delicate than for subspaces and it greatly depends on the relationship between the frame $\Phi$ and the sequence of subspaces $(V_n)_{n=1}^\infty$.
\medskip

 In typical applications of phase retrieval, the model is to recover  a vector $f$ in an infinite-dimensional Hilbert space from the magnitudes of its frame coefficients.  However, we are only realistically able to consider finitely many measurements and instead of reconstructing the full vector $f$, we are satisfied with reconstructing an approximation $P_{H_0} f$ where $P_{H_0}$ is the orthogonal projection onto a finite dimensional subspace of $\mathcal{H}$.  Phase retrieval is necessarily stable on this subspace $H_0$, but if we wish for $P_{H_0} f$ to be a very good approximation to $f$ then we must choose $H_0$ to be a very large subspace which will result in the stability constant for phase retrieval on $H_0$ being exceedingly large. 
 This stability problem can be fixed if we know that $f\in \mathcal{B}_\gamma(R)$ and that $\Phi$ does Lipschitz stable phase retrieval on $\mathcal{B}_\gamma(R)$.  In this case, we may choose the subspace $H_0$ as large as we want while maintaining a uniform bound for the stability of phase retrieval.  
 \medskip

Before discussing our examples and counterexample to Problem \ref{Problem 1}, we feel that it is instructive to first compare the above formulation of phase retrieval for frames with a more recent approach from \cite{calderbank2022stable,christ2022examples,FOTP,thesis2} which permits stable recovery for infinite-dimensional subspaces. For this, we note that, up to relabeling the constant $C$, the inequality \eqref{0.4} is equivalent to the inequality 
\begin{equation}\label{new ineq}
    \inf_{|\alpha|=1}\|T_\Phi(f)-\alpha T_\Phi(g)\|_{\ell_2(I)}\leq C \| |T_\Phi(f)|-|T_\Phi(g)|\|_{\ell_2(I)}.
\end{equation}
Since the operator $T_\Phi$ appears on both sides of the inequality \eqref{new ineq}, we may relabel $f\leftrightarrow T_\Phi(f)$ and $g\leftrightarrow T_\Phi(g)$ to see that a frame $\Phi$ does stable phase retrieval if and only if there exists a constant $C\geq 1$ such that
\begin{equation}\label{new ineq2}
    \inf_{|\alpha|=1}\|f-\alpha g\|_{\ell_2(I)}\leq C \| |f|-|g|\|_{\ell_2(I)} \hspace{5mm} \text{for all} \ f,g\in T_{\Phi}(\mathcal{H})\subseteq \ell_2(I).
\end{equation}
The importance of the reformulated inequality \eqref{new ineq2} is that it makes no reference to the operator $T_\Phi$; instead, the operator $T_\Phi$ has been encoded into the linear subspace $T_{\Phi}(\mathcal{H})\subseteq \ell_2(I)$  for which the inequality in \eqref{new ineq2} is required to hold. The following definition presents a vast generalization of stable phase retrieval for frames which also encompasses the situation in \eqref{new ineq2}.

\begin{definition}\label{def of pr}
    Let $(\Omega,\Sigma,\mu)$ be a measure space and let $0<\sigma\leq 1$. A subset $V\subseteq L_2(\mu)$ is said to do $\sigma$-\textit{H\"older stable phase retrieval} if there exists a constant $C\geq 1$ such that for all $f,g\in V$, 
\begin{equation}\label{HSPR Def}
\inf_{|\alpha|=1}\|f-\alpha g\|_{L_2}\leq C\||f|-|g|\|_{L_2}^\sigma\left(\|f\|_{L_2}+\|g\|_{L_2}\right)^{1-\sigma}.
\end{equation}
When \eqref{HSPR Def} holds with $\sigma=1$, we say that $V\subseteq L_2(\mu)$ does \emph{(Lipschitz) stable phase retrieval}.
\end{definition}
As demonstrated by \eqref{new ineq2}, phase retrieval problems for a frame $\Phi$ are equivalent to phase retrieval problems for the linear subspace $T_\Phi(\mathcal{H})\subseteq \ell_2(I).$ However, inequalities of the form \eqref{HSPR Def}  also arise when considering other operators $T:\mathcal{H}\to L_2(\mu)$ which embed a Hilbert space $\mathcal{H}$ into a function space $L_2(\mu)$. Indeed, there are many physical situations where one must recover some vector $f$ up to global phase from measurements of the form $|Tf|$, where $T$ is not the analysis operator of a discrete frame. The most classical examples are the Fourier and Pauli phase retrieval problems, which appear in crystallography \cite{MR4094471,MR3674475} and quantum mechanics \cite{MR4610225,pauli1933allgemeinen}, respectively. Other important examples are the STFT and wavelet phase retrieval problems \cite{MR4537580,MR3935477, MR4404785,MR4828017}, and a physically interesting example arising from a nonlinear operator $T$ is the radar ambiguity problem \cite{MR2362409,MR1700086}. As shown in \cite{calderbank2022stable,christ2022examples,FOTP,thesis2}, these problems are all special cases of \Cref{def of pr}, and for many of these problems it is possible to achieve stability in infinite dimensions. We also remark that \Cref{def of pr} makes sense with $L_2(\mu)$ replaced by $L_p(\mu)$ or even a general Banach lattice. Unified perspectives on nonlinear inverse problems involving lattice operations (e.g.~phase retrieval, declipping and ReLU recovery) or more general piecewise-linear maps (such as those arising in unlimited sampling) can be found in \cite{abdalla2025sharp,freeman2025optimal}. Physical and mathematical evidence that the prior $V$ in \Cref{def of pr} may in general only be a subset and not a subspace can be found in \cite{aslan2025ptygenography,bendory2023phase}.
 \medskip

In the language of \Cref{def of pr}, \Cref{CCD Thm}
states that the phase and scaling invariant subset $T_\Phi(\mathcal{B}_\gamma(R))\subseteq \ell_2$ does $\frac{\gamma-1}{\gamma}$-H\"older stable phase retrieval.   In \cite{FOTP}, it is shown that if $V$ is a \emph{linear subspace} of $L_2(\mu)$, then H\"older stable phase retrieval is equivalent to Lipschitz stable phase retrieval. More precisely, we have the following theorem.
\begin{theorem}[Corollary 3.12 of \cite{FOTP}]\label{H implies Lip}
    Let $V$ be a linear subspace of a real or complex Hilbert space $L_2(\mu)$. If $V$ does $\sigma$-H\"older stable phase retrieval with constant $C$ then $V$ does stable phase retrieval with constant $(4C)^\frac{1}{\sigma}$.
\end{theorem}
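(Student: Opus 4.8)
The plan is to prove the Lipschitz bound $\inf_{|\alpha|=1}\|f-\alpha g\|_{L_2}\le(4C)^{1/\sigma}\||f|-|g|\|_{L_2}$ for all $f,g\in V$. For $f,g\in V$ (not both zero) write $m=\|f\|_{L_2}+\|g\|_{L_2}$, $\epsilon=\||f|-|g|\|_{L_2}$, and $d=\inf_{|\alpha|=1}\|f-\alpha g\|_{L_2}$; I will repeatedly use that $d\le m$ and that $\bigabs{\,\|f\|-\|g\|\,}\le\epsilon$ (reverse triangle inequality, since $\|f\|=\||f|\|$). The cornerstone is a one-line lemma for the ``easy'' regime: if $d\ge m/4$, then $\sigma$-H\"older gives $\tfrac14 m\le d\le C\epsilon^\sigma m^{1-\sigma}$, so $m^\sigma\le 4C\epsilon^\sigma$, and hence $d\le m\le(4C)^{1/\sigma}\epsilon$. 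This is already the target inequality and is the source of the constant; all the difficulty lies in the regime $d<m/4$.

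In that regime, after replacing $g$ by a near-optimal unimodular multiple (which changes none of $d,\epsilon,m,\|f\|,\|g\|$) we may assume $\|f-g\|$ is arbitrarily close to $d$, in particular $\|f-g\|<m/4$; together with $\bigabs{\,\|f\|-\|g\|\,}\le\epsilon\le d<m/4$ this forces $\|f\|,\|g\|\in[\tfrac38 m,\tfrac58 m]$, so $\|f\|\approx\|g\|$ while $d$ is small — the worse the regime, the closer $(f,g)$ is, modulo a global phase, to a positive scalar multiple of itself. I would then argue by extremality. Put
\[
L=\sup\big\{\inf\nolimits_{|\alpha|=1}\|u-\alpha v\|_{L_2}\big/\||u|-|v|\|_{L_2}:u,v\in V,\ |u|\ne|v|\big\},
\]
and suppose, for contradiction, $L>(4C)^{1/\sigma}$. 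Choose $u,v\in V$ whose ratio is close to $L$ and, by scaling, with $\||u|-|v|\|=1$. By the lemma (contrapositive) the pair lies in the regime above, so $\|u\|$ and $\|v\|$ are large and comparable — large because a further use of $\sigma$-H\"older gives $\|u\|+\|v\|\ge(\,\inf_{|\alpha|=1}\|u-\alpha v\|_{L_2}/C)^{1/(1-\sigma)}$, which is big once $L$ is. Now exploit that $V$ contains the whole line through $u$ and $v$, together with all of their linear combinations, to manufacture a pair in $V$ with strictly larger ratio while keeping its magnitude difference of size $O(1)$; this contradicts the definition of $L$, forcing $L\le(4C)^{1/\sigma}$. (An essentially equivalent route is a finite iteration of the $\sigma$-H\"older inequality along a cleverly chosen sequence of auxiliary vectors in $V$, arranged so that after finitely many steps the factor $m^{1-\sigma}$ is replaced by $\epsilon^{1-\sigma}$, at which point the H\"older and Lipschitz bounds coincide.)

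I expect this construction to be the main obstacle. Feeding $\epsilon\le d$ into $\sigma$-H\"older for $(f,g)$ only reproduces $d\le Cd^\sigma m^{1-\sigma}$, so the trivial inequality $\||u|-|v|\|\le\|u-v\|$ is of no use and the linearity of $V$ must be exploited globally: any pair built from $f$ and $g$ by a \emph{single} linear operation has norm-sum comparable to $\|f\|$, and the ``collinear'' examples — where $|u|-|v|$ and $\inf_{|\alpha|=1}\|u-\alpha v\|_{L_2}$ vanish together — show that shrinking the norm-sum by a linear move also collapses the magnitude data. The construction therefore has to control the joint evolution of $d$, $\epsilon$ and $m$.

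Finally, the real and complex cases run in parallel, since everything above rests only on the identity
\[
\Big(\inf\nolimits_{|\alpha|=1}\|f-\alpha g\|_{L_2}\Big)^{2}=\||f|-|g|\|_{L_2}^{2}+2\big(\langle|f|,|g|\rangle-|\langle f,g\rangle|\big)
\]
and on $\langle|f|,|g|\rangle\ge|\langle f,g\rangle|\ge 0$, both valid over either scalar field; equivalently, the task is to bound $\langle|f|,|g|\rangle-|\langle f,g\rangle|$ by a fixed multiple of $\||f|-|g|\|_{L_2}^{2}$.
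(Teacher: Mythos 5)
Your ``easy regime'' computation is correct and does produce the right constant: if $d:=\inf_{|\alpha|=1}\|f-\alpha g\|\geq m/4$ with $m:=\|f\|+\|g\|$, then the H\"older inequality gives $m^\sigma\leq 4C\epsilon^\sigma$ and hence $d\leq m\leq (4C)^{1/\sigma}\epsilon$. But the proof has a genuine gap exactly where you say the difficulty lies: in the regime $d<m/4$ you never supply the construction. The extremality scheme (``take a near-extremal pair for $L$ and manufacture a pair in $V$ with strictly larger ratio'') is only a plan, and as stated it has its own problems --- a priori $L$ may be infinite, there is no compactness in infinite dimensions to produce genuine extremizers, and a near-extremizer cannot be contradicted merely by exhibiting a pair with a \emph{slightly} larger ratio --- but the decisive point is that the ``manufacture'' step, which you yourself flag as the main obstacle, is precisely the content of the theorem and is not carried out. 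Likewise, your closing reformulation (bounding $\langle|f|,|g|\rangle-|\langle f,g\rangle|$ by a multiple of $\||f|-|g|\|^2$) is a correct restatement of the goal, not progress toward it.

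The missing ingredient is the orthogonalization lemma used in the paper (Theorem \ref{orth red}, from \cite{localphase,FOTP}): for linearly independent $f,g$ there exist \emph{orthogonal} $f',g'\in\mathrm{span}\{f,g\}$ with
\begin{equation*}
\inf_{|\alpha|=1}\|f-\alpha g\|=\inf_{|\alpha|=1}\|f'-\alpha g'\|\quad\text{and}\quad \||f'|-|g'|\|\leq\||f|-|g|\|.
\end{equation*}
Linearity of $V$ is used only to ensure $f',g'\in V$, so the H\"older hypothesis applies to the new pair; and for orthogonal vectors one has $\inf_{|\alpha|=1}\|f'-\alpha g'\|=(\|f'\|^2+\|g'\|^2)^{1/2}\geq \tfrac{1}{\sqrt2}\left(\|f'\|+\|g'\|\right)$, so every orthogonal pair automatically lies in your easy regime and the computation you already have closes the argument. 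In other words, your hard regime is eliminated wholesale by a single explicit two-dimensional move inside $\mathrm{span}\{f,g\}$, rather than by an iteration or a compactness/extremality argument. To repair your write-up you would need either to prove this lemma (a concrete $2\times 2$ computation with the identity you quote at the end, choosing the right rotation of the pair) or to replace it by some other mechanism that converts a nearly-collinear pair into a well-separated one without increasing $\||f|-|g|\|$; as it stands, the proposal does not prove the statement.
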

The proof of \Cref{H implies Lip} is based on classifying the extremizers of the phase retrieval inequality. As observed in \cite{localphase} and \cite{FOTP}, instabilities are maximized on orthogonal vectors. 
\begin{theorem}[\cite{localphase,FOTP}]\label{orth red}
    Given any linearly independent $f,g\in L_2(\mu)$ there exist orthogonal vectors $f',g'\in span\{f,g\}$ such that
    \begin{equation*}
        \inf_{|\alpha|=1}\|f-\alpha g\|=\inf_{|\alpha|=1}\|f'-\alpha g'\|
        \end{equation*}
    and 
    \begin{equation*}
        \| |f'|-|g'|\|\leq \| |f|-|g|\|.
    \end{equation*}
\end{theorem}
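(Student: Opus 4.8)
The plan is to exhibit an explicit orthogonal pair. I would begin by recording the two elementary identities $\inf_{|\alpha|=1}\|f-\alpha g\|^2=\|f\|^2+\|g\|^2-2|\langle f,g\rangle|$ and $\||f|-|g|\|^2=\|f\|^2+\|g\|^2-2\langle|f|,|g|\rangle$, noting that when $f'\perp g'$ the first of these collapses to $\|f'\|^2+\|g'\|^2$. Replacing $g$ by $\lambda g$ for a suitable unimodular $\lambda$ changes none of $|g|$, $\operatorname{span}\{f,g\}$, $\inf_{|\alpha|=1}\|f-\alpha g\|$, or $\||f|-|g|\|$, so I may assume $c:=\langle f,g\rangle\ge 0$; then $\inf_{|\alpha|=1}\|f-\alpha g\|^2=\|f\|^2+\|g\|^2-2c=\|f-g\|^2$, and the goal reduces to producing orthogonal $f',g'\in\operatorname{span}\{f,g\}$ with $\|f'\|^2+\|g'\|^2=\|f-g\|^2$ and $\langle|f'|,|g'|\rangle\ge\langle|f|,|g|\rangle-c$.

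For the construction, since $f,g$ are linearly independent we have $f\pm g\neq 0$, and $c\ge 0$ forces $\|f-g\|\le\|f+g\|$; I would set $t:=\|f-g\|/\|f+g\|\in(0,1]$ and define
\[
f':=\tfrac12\bigl((f-g)+t(f+g)\bigr),\qquad g':=\tfrac12\bigl((f-g)-t(f+g)\bigr)\in\operatorname{span}\{f,g\}.
\]
Because $c$ is real, $\langle f-g,f+g\rangle=\|f\|^2-\|g\|^2$ is real, so the cross terms in $\langle f',g'\rangle=\tfrac14\langle(f-g)+t(f+g),\,(f-g)-t(f+g)\rangle$ cancel, leaving $\tfrac14(\|f-g\|^2-t^2\|f+g\|^2)=0$; hence $f'\perp g'$. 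The same relation $t^2\|f+g\|^2=\|f-g\|^2$ gives $\|f'\|^2+\|g'\|^2=\tfrac12(\|f-g\|^2+t^2\|f+g\|^2)=\|f-g\|^2$, so $\inf_{|\alpha|=1}\|f'-\alpha g'\|=\inf_{|\alpha|=1}\|f-\alpha g\|$, which is the first required equality. (One also checks $f',g'\neq 0$, lest the pair be degenerate, but only linear independence is needed for that.)

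The crux is the remaining inequality $\||f'|-|g'|\|\le\||f|-|g|\|$, and I expect this to be the main obstacle. Writing $u=f-g$, $h=t(f+g)$, so $f'=\tfrac12(u+h)$ and $g'=\tfrac12(u-h)$, the identities $|u\pm h|^2=|u|^2+|h|^2\pm2\operatorname{Re}(u\bar h)$ and $|u|^2|h|^2=(\operatorname{Re} u\bar h)^2+(\operatorname{Im} u\bar h)^2$ give the pointwise formula
\[
\bigl(|f'|-|g'|\bigr)^2=\tfrac12\Bigl(|u|^2+|h|^2-\sqrt{\bigl(|u|^2-|h|^2\bigr)^2+4\bigl(\operatorname{Im}(u\bar h)\bigr)^2}\Bigr).
\]
Abbreviating pointwise $m=|f|$, $n=|g|$, $q=\operatorname{Re}(f\bar g)$ (so $q\le mn$ by Cauchy--Schwarz), one has $|u|^2=m^2+n^2-2q$, $|h|^2=t^2(m^2+n^2+2q)$, and $\operatorname{Re}(u\bar h)=t(m^2-n^2)$; a short computation using $(\operatorname{Re} u\bar h)^2+(\operatorname{Im} u\bar h)^2=|u|^2|h|^2$ turns the display into $(|u|^2-|h|^2)^2+4(\operatorname{Im} u\bar h)^2=A^2-4t^2(m^2-n^2)^2$ with $A:=|u|^2+|h|^2$, so that
\[
\bigl(|f'|-|g'|\bigr)^2=\frac{2t^2(m^2-n^2)^2}{A+\sqrt{A^2-4t^2(m^2-n^2)^2}}.
\]
It then suffices to prove pointwise that this is $\le (m-n)^2=(m^2-n^2)^2/(m+n)^2$ (both sides vanish if $m=n$), i.e.\ $A+\sqrt{A^2-4t^2(m^2-n^2)^2}\ge 2t^2(m+n)^2$: if $A\ge 2t^2(m+n)^2$ this is immediate, and otherwise squaring reduces it to $A\ge t^2(m+n)^2+(m-n)^2$, which rearranges to $2(1-t^2)(mn-q)\ge 0$ and holds since $t\le1$ and $q\le mn$. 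Integrating over $x$ yields $\||f'|-|g'|\|^2\le\||f|-|g|\|^2$. The genuinely delicate points are locating this construction — the precise scaling $t=\|f-g\|/\|f+g\|$ is exactly what balances orthogonality against the norm constraint — and isolating the clean scalar inequality above; the complex case turns out to be no harder than the real one, since the term $4(\operatorname{Im} u\bar h)^2$ only decreases $(|f'|-|g'|)^2$ and the entire complex content collapses into the Cauchy--Schwarz bound $\operatorname{Re}(f\bar g)\le|f||g|$.
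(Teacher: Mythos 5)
Your argument is correct. Note that the paper does not actually prove this statement -- it is quoted from the references [localphase, FOTP] -- so there is no internal proof to compare against; what you give is a valid, self-contained derivation. The phase normalization making $\langle f,g\rangle\ge 0$, the explicit pair $f'=\tfrac12\bigl((f-g)+t(f+g)\bigr)$, $g'=\tfrac12\bigl((f-g)-t(f+g)\bigr)$ with $t=\|f-g\|/\|f+g\|$, and the verification of orthogonality and of $\|f'\|^2+\|g'\|^2=\|f-g\|^2$ are all right, and the key pointwise bound checks out: the identity $(|f'|-|g'|)^2=\tfrac12\bigl(A-\sqrt{A^2-4t^2(m^2-n^2)^2}\bigr)$ with $A=|u|^2+|h|^2$ is correct, and your case analysis reduces the desired comparison with $(m-n)^2$ to $A\ge t^2(m+n)^2+(m-n)^2$, which is exactly $2(1-t^2)(mn-q)\ge 0$ and holds since $t\le 1$ and $q=\operatorname{Re}(f\bar g)\le mn$. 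The only point worth a sentence in a final write-up is the degenerate set where $|u|^2+|h|^2=0$ (so the rationalized quotient is $0/0$): there $f'$ and $g'$ vanish pointwise, so the inequality is trivial, and likewise the case $m=n$ is handled by the unrationalized formula; this is cosmetic, not a gap. Your approach, matching an explicit orthogonalization inside $\operatorname{span}\{f,g\}$ with a scalar inequality, is in the same spirit as the cited sources, and the observation that the complex case only enters through $\operatorname{Re}(f\bar g)\le|f||g|$ is a nice way to see why it costs nothing extra.
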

As a corollary of \Cref{orth red}, to prove that a linear subspace $V\subseteq L_2(\mu)$ does stable phase retrieval, it suffices to check that \eqref{HSPR Def} holds for $f,g\in V$ satisfying $\|f\|=1,$ $\|g\|\leq 1$ and $f\perp g.$ In this case, $\|f-\alpha g\|=(\|f\|^2+\|g\|^2)^{\frac{1}{2}}\in [1,\sqrt{2}]$ is independent of the choice of $\alpha$ and is uniformly bounded away from zero. Having reduced to checking \eqref{HSPR Def} for vectors $f$ and $g$ which  are  ``well-separated" in the quotient metric, it is easy to see that H\"older and Lipschitz stability are equivalent on linear subspaces.  This fact was critically used in \cite{christ2022examples} to construct the first examples of infinite-dimensional linear subspaces of complex $L_2(\mathbb{T})$ doing stable phase retrieval, as the argument in \cite{christ2022examples} only directly established H\"older stability.
 \medskip

In view of Theorems~\ref{H implies Lip} and \ref{orth red},  it would be natural to conjecture that the subsets $\mathcal{B}_\gamma(R)$ in \Cref{CCD Thm} do Lipschitz stable phase retrieval. However, the first objective of this article is to show that H\"older stability is the best one can achieve, in general, which answers the question posed by Cahill, Casazza and Daubechies in \cite[Remark 2.8]{MR3554699}. In particular,  the orthogonalization procedure we used to produce well-separated vectors witnessing instabilities in \Cref{orth red} need not leave the sets $\mathcal{B}_\gamma(R)$ invariant. Instead, we will carefully construct an example of a set $\mathcal{B}_\gamma(R)$ where we can create close vectors with even closer moduli at a rate consistent with ruling out some, but not all, H\"older stability exponents.

\section{A counterexample to Lipschitz stable phase retrieval}\label{S:Counter}
We now present our counterexample to the Cahill-Casazza-Daubechies problem.

\begin{theorem}\label{Main Theorem}
    Let $\mathcal{H}$ be an infinite-dimensional separable Hilbert space (real or complex).  There exists a sequence of nested finite-dimensional linear subspaces $(V_m)_{m=1}^\infty$ in $\mathcal{H}$ such that $\dim(V_m)=m$ for all $m\in\N$, a frame $\Phi$ of $\mathcal{H}$, and an increasing function $G(m)$ satisfying \eqref{E:CCD1} such that
for every $\gamma>1$, $R>0$, $\sigma>1+\gamma$ and $C>0$ there exists $f,g\in \mathcal{B}_\gamma(R)$ with
$$    \inf_{|\alpha|=1}\|f-\alpha g\|_\mathcal{H}> C\left(\|f\|_\mathcal{H}+\|g\|_\mathcal{H}\right)^\frac{1}{\sigma}\|\mathcal{A}_\Phi(f)-\mathcal{A}_\Phi(g)\|_{\ell_2}^{\frac{\sigma-1}{\sigma}}.
$$
In particular, the frame $\Phi$ fails to do Lipschitz stable phase retrieval on $\mathcal{B}_\gamma(R)$ and even fails to do $(\sigma-1)/\sigma$-H\"older stable phase retrieval for any $\sigma>1+\gamma$.
\end{theorem}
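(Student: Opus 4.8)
The plan is to realize all three objects explicitly on $\mathcal{H}=\ell_2$ (real or complex) with standard orthonormal basis $(e_n)_{n\in\N}$, take $V_m=\spn\{e_1,\dots,e_m\}$, and build $\Phi$ from a rescaled family of finite-dimensional phase retrieval frames. For each $m$ fix a frame $\Psi^{(m)}$ for $V_m$ doing phase retrieval, with phase retrieval constant $\kappa_m$ and upper frame bound $B_m$ of at most polynomial growth in $m$ (such frames are known; see e.g.\ \cite{MR3260258,MR3746047,KS}), and set
\[
\Phi \;=\; \{e_n : n\in\N\}\;\cup\;\bigcup_{m\in\N}\bigl\{2^{-m}\psi : \psi\in\Psi^{(m)}\bigr\}.
\]
The vectors $e_n$ alone yield the lower frame bound $A=1$, and $\sum_m 4^{-m}B_m<\infty$ yields a finite upper bound, so $\Phi$ is a frame. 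It does phase retrieval on all of $\mathcal{H}$: from the measurements against $\{2^{-m}\psi:\psi\in\Psi^{(m)}\}$ one recovers $P_mf$ up to a unimodular scalar for each $m$, and since $\|P_mf\|$ is nondecreasing in $m$ these recoveries are compatible for all large $m$ once $f\neq0$, so $P_mf\to f$ fixes $f$ up to one global phase. Restricted to $V_m$, the sub-collection $\{2^{-m}\psi:\psi\in\Psi^{(m)}\}$ by itself does phase retrieval with constant $\kappa_m2^m$ (rescaling a frame by $t$ multiplies its phase retrieval constant by $t^{-1}$), so \eqref{E:CCD1} holds with $G(m):=\max\{\kappa_m2^m,\,m\}$; this $G$ is increasing, $G(m)\to\infty$, and — the point on which everything turns — $\log_2G(m)=m(1+o(1))$ because $\kappa_m$ is subexponential.

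With the setup fixed I would take, for given $\gamma>1$, $R>0$, $\sigma>1+\gamma$, $C>0$ and $n$ large, $\delta:=G(n)^{-\gamma}R\in(0,1)$ and $f:=e_1+\delta e_n$, $g:=e_1-\delta e_n$. The only part of $f$ (or $g$) past $V_m$ is $\pm\delta e_n$, which is $0$ for $m\ge n$ and has norm $\delta\le G(m+1)^{-\gamma}R\le G(m+1)^{-\gamma}R\|f\|$ for $m<n$ (using $G$ increasing), so $f,g\in\mathcal{B}_\gamma(R)$. Next, $\inf_{|\alpha|=1}\|f-\alpha g\|=2\delta$ in both the real and complex cases, since $\|f-\alpha g\|^2=2(1+\delta^2)-2(1-\delta^2)\RE\alpha$ is minimized at $\alpha=1$. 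For the modulus difference, the $e_n$-measurements contribute $0$, while a measurement against $2^{-m}\psi$ contributes $2^{-m}\bigl||\psi_1+\delta\psi_n|-|\psi_1-\delta\psi_n|\bigr|\le 2^{1-m}\delta|\psi_n|$ (with $\psi_k:=\langle\psi,e_k\rangle$), which is $0$ unless $n\le m$; summing over $\psi$ and $m$ and using $\sum_{\psi\in\Psi^{(m)}}|\psi_n|^2\le B_m$ gives
\[
\|\mathcal{A}_\Phi(f)-\mathcal{A}_\Phi(g)\|_{\ell_2}^2\;\le\;4\delta^2\sum_{m\ge n}4^{-m}B_m\;\le\;C'\delta^2\,4^{-n}n^{k}
\]
for suitable $C',k$ (the tail series is geometrically dominated by its first term).

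Plugging in finishes it: with $\mu:=(\sigma-1)/\sigma$ and $\|f\|+\|g\|\le3$, the inequality to violate, $2\delta>C(\|f\|+\|g\|)^{1/\sigma}\|\mathcal{A}_\Phi(f)-\mathcal{A}_\Phi(g)\|^{\mu}$, is implied by $\delta>K\,2^{-n(\sigma-1)}n^{k(\sigma-1)/2}$ for some $K=K(C,C',\sigma)$; since $\delta=G(n)^{-\gamma}R$ and $G(n)=2^{n(1+o(1))}$, this reduces to $2^{\,n(\sigma-1-\gamma)}\gg n^{O(1)}$, true because $\sigma-1-\gamma>0$. So the inequality fails for all large $n$, giving the theorem (hence failure of $(\sigma-1)/\sigma$-H\"older stability, and a fortiori of Lipschitz stability, on every $\mathcal{B}_\gamma(R)$). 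The one genuinely delicate matter — and the reason the exponent threshold is exactly $\sigma>1+\gamma$ — is that $G(m)$ is pinned to grow like $2^m$: not slower, by the gliding-hump obstruction to stable phase retrieval in infinite dimensions, and the construction must keep it from growing faster than $2^{m(1+o(1))}$, which is where the subexponential control of $\kappa_m,B_m$ is used. With $\delta\sim G(n)^{-\gamma}\sim2^{-\gamma n}$ and modulus difference $\sim\delta2^{-n}\sim2^{-(\gamma+1)n}$, the comparison $2\delta\gtrsim(\text{modulus difference})^{(\sigma-1)/\sigma}$ succeeds exactly when $\gamma<(\gamma+1)(\sigma-1)/\sigma$, i.e.\ $\sigma>1+\gamma$; I expect no conceptual obstacle beyond this balancing and the attendant bookkeeping.
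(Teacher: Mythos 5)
Your proposal is correct and follows essentially the same route as the paper: the same subspaces $V_m=\mathrm{span}\{e_1,\dots,e_m\}$, a frame built from an orthonormal basis together with $2^{-m}$-rescaled finite-dimensional phase retrieval frames so that $G(m)\approx 2^m$, the test pair $e_1\pm\delta e_m$ with $\delta= G(m)^{-\gamma}R$, the bound $\|\mathcal{A}_\Phi(f)-\mathcal{A}_\Phi(g)\|\lesssim 2^{-m}\delta$, and the same balancing yielding the threshold $\sigma>1+\gamma$. The only cosmetic point is to make $G(m)=\max\{\kappa_m2^m,m\}$ genuinely increasing (e.g.\ replace $\kappa_m$ by $\max_{l\le m}\kappa_l$), which changes nothing in the estimates.
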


\begin{proof}
   Let $(e_n)_{n=1}^\infty$ be an orthonormal basis for $\mathcal{H}$ and consider $V_m=\text{span}\{e_1,\dots,e_m\}\subseteq \mathcal{H}$ for all $m\in\N$. We let the frame $\Phi$  consist of the union of $(e_n)_{n=1}^\infty$ and $(\frac{1}{2^n}x_{j,n})_{n\in \mathbb{N},\ 1\leq j\leq N_n}$ where for each $n\in \mathbb{N}$, $(x_{j,n})_{1\leq j\leq N_n}$ is a Parseval frame for $V_n$ which does $C$-stable phase retrieval for some uniform constant $C$. It is well-known that such Parseval frames $(x_{j,n})_{1\leq j\leq N_n}$ exist, and it is easy to check that $\Phi$ forms a frame for $\mathcal{H}$ with lower frame bound $1$ and upper frame bound at most $\frac{4}{3}$.
     \medskip

    We next note that since $(x_{j,n})_{1\leq j\leq N_n}$ is a Parseval frame doing $C$-stable phase retrieval for $V_m$ we have that for any $m\in \mathbb{N}$  and $f,g\in V_m$, 
    \begin{equation*}
        \begin{split}
            \inf_{|\alpha|=1}\|f-\alpha g\|_\mathcal{H}\leq 2^mC\|\mathcal{A}_\Phi(f)-\mathcal{A}_\Phi(g)\|.
        \end{split}
    \end{equation*}
    We may thus set $G(m)=2^mC.$
     \medskip

    We now fix $\gamma>1$, $R>0$ and let the subset $\mathcal{B}_\gamma(R)\subseteq\mathcal{H}$ be defined as in \eqref{E:CCD2}.  Thus, we have 
    \begin{equation*}
\mathcal{B}_\gamma(R)=\{f\in \mathcal{H} : \|f-P_nf\|\leq 2^{-(n+1)\gamma}C^{-\gamma}R\|f\|\ \text{for every}\ n\in \mathbb{N}\}.
\end{equation*}
      Given $m\in \mathbb{N}$, we let $x:=e_1+\frac{R}{2^{m\gamma}C^\gamma}e_m$ and $y:=e_1-\frac{R}{2^{m\gamma}C^\gamma}e_m$. Note that for $m$ sufficiently large, we have
    \begin{equation}\label{norm x-y}
        \inf_{|\alpha|=1}\|x-\alpha y\|=\|x-y\|=\frac{2R}{2^{m\gamma}C^\gamma}
    \end{equation}
and 
  \begin{equation*}
  \begin{cases}
    &\|x-P_nx\|=\frac{R}{2^{m\gamma}C^\gamma}, \ \text{if} \ n<m,
    \\  
    &\|x-P_nx\|=0, \ \ \ \ \ \ \   \text{if} \ n\geq m,
    \end{cases}
\end{equation*}
with similar identities for $y$ in place of $x$. Thus, $x,y\in \mathcal{B}_\gamma(R)$. However,
\begin{equation*}
    \begin{split}
        \|\mathcal{A}_\Phi(x)-\mathcal{A}_\Phi(y)\|^2&=\sum_{n=1}^\infty\big||\langle x,\varphi_n\rangle|-|\langle y,\varphi_n\rangle |\big|^2\\
        &=\sum_{n=m}^\infty \sum_{j=1}^{N_n} \left|\left|\langle x,\frac{x_{j,n}}{2^n}\rangle \right|-\left|\langle y,\frac{x_{j,n}}{2^n}\rangle\right|\right|^2
        \\
        &\leq\sum_{n=m}^\infty \frac{1}{4^n}\sum_{j=1}^{N_n} \left|\langle x-y,x_{j,n}\rangle\right|^2
        \\
        &\leq \sum_{n=m}^\infty\frac{1}{4^n}\|x-y\|^2\hspace{.5cm}\textrm{ as $(x_{j,n})_{j=1}^{N_n}$ has upper frame bound $1$}.
    \end{split}
\end{equation*}
Hence,
\begin{equation*}
    \|\mathcal{A}_\Phi(x)-\mathcal{A}_\Phi(y)\|\leq \frac{2}{\sqrt{3}}\cdot \frac{1}{2^m}\|x-y\|.
\end{equation*}
    Now suppose that an inequality of the form \eqref{conclusion} holds on $\mathcal{B}_\gamma(R)$,  i.e., for some $0< \mu\leq 1$ and all $f,g\in \mathcal{B}_\gamma(R)$ we have the inequality
    \begin{equation*}\label{conclusion2}
    \inf_{|\alpha|=1}\|f-\alpha g\|_\mathcal{H}\leq K\left(\|f\|_\mathcal{H}+\|g\|_\mathcal{H}\right)^{1-\mu}\|\mathcal{A}_\Phi(f)-\mathcal{A}_\Phi(g)\|_{\ell_2}^{\mu}.
\end{equation*}
Note that for all sufficiently large $m$ we have $\|x\|,\|y\|\leq 2$. Hence,
\begin{equation*}
    \begin{split}
    \|x-y\|\leq K 4^{1-\mu}\left[\frac{2}{\sqrt{3}}\cdot \frac{1}{2^m}\|x-y\|\right]^{\mu}.
    \end{split}
\end{equation*}
Sending $m\to\infty$ and using \eqref{norm x-y},  we obtain the restriction $\mu\leq \frac{\gamma}{1+\gamma}<1$. Thus, $\mathcal{B}_\gamma(R)$ cannot do Lipschitz stable phase retrieval or even $\mu$-H\"older stable phase retrieval if $\mu$ is close enough to one depending on $\gamma$. Setting $\mu=\frac{\sigma-1}{\sigma}$ completes the proof of \Cref{Main Theorem}.
\end{proof}

\section{Obtaining Lipschitz stable phase retrieval}\label{S:L}
Despite \Cref{Main Theorem}, we will prove that it is still possible for $\mathcal{B}_\gamma(R)$ to be ``large" and satisfy Lipschitz stable phase retrieval. For the case that $\mathcal{H}$ is a real Hilbert space, we are able to construct a frame $(\varphi_n)_{n=1}^\infty$ of $\mathcal{H}$ which is a small perturbation of an orthonormal basis and does stable phase retrieval on some set $\mathcal{B}$ which is of the same form as the sets constructed in \Cref{CCD Thm} (Theorem 2.7 in \cite{MR3554699}).  That is, we take a nested sequence of finite-dimensional linear subspaces $(V_m)_{m=1}^\infty$ and a rapidly decreasing sequence $\beta_n\searrow0$ and define $\mathcal{B}$ as the set of all vectors $f\in \mathcal{H}$ such that for all $m\in\N$, the distance between $f$ and $V_m$ is at most $\beta_{m+1}\|f\|_\mathcal{H}$.  
\begin{theorem}\label{T:R}
Let $(e_n)_{n=1}^\infty$ be an orthonormal basis for an infinite-dimensional real Hilbert space $\mathcal{H}$. For each $m\in\N$ let $V_m=\textrm{span}_{1\leq n\leq m}e_n$ and let $P_m:\mathcal{H}\rightarrow V_m$ be the orthogonal projection. Fix $0<\vp<8^{-1}$ and choose $\alpha_n\searrow 0$ such that $\sum_{n=2}^\infty \alpha_n<\vp/2$ and $\beta_n\searrow 0$ such that $\beta_n<2^{-1}\alpha_n$ for all $n\in\N$.   Define the subset
\begin{equation*}
\mathcal{B}=\{f\in \mathcal{H} : \|f-P_mf\|\leq \beta_{m+1}\|f\|\ \text{for every}\ m\in \mathbb{N}\}
\end{equation*} and consider the Riesz basis $(\varphi_n)_{n=1}^\infty$ of $\ell_2$ given by $\varphi_1=e_1$ and $\varphi_n=\alpha_n e_1+e_n$ for all $n\geq 2.$ 
Then for all $f,g\in \mathcal{B}$ we have
\begin{equation*}
\inf_{|\alpha|=1}\|f-\alpha g\|\leq (1-\vp)^{-1/2}\|\mathcal{A}_\Phi(f)-\mathcal{A}_\Phi(g)\|.
\end{equation*}
\end{theorem}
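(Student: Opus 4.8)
The plan is to exploit the very strong concentration that membership in $\mathcal{B}$ imposes. Every nonzero $f\in\mathcal{B}$ has $\langle f,e_1\rangle$ dominating $\|f\|$, and the tail bounds defining $\mathcal{B}$ are so severe relative to the perturbation sizes $\alpha_n$ that, after choosing the global sign so that $\langle f,e_1\rangle>0$, \emph{all} of the frame coefficients $\langle f,\varphi_n\rangle$ become strictly positive. On such vectors the modulus in $\mathcal{A}_\Phi$ does nothing, so $\mathcal{A}_\Phi$ agrees with the linear analysis operator $T_\Phi$ on differences of elements of $\mathcal{B}$, and the desired estimate collapses to the assertion that $(\varphi_n)$ has lower Riesz bound at least $1-\vp$, which holds because $(\varphi_n)$ is a small perturbation of the orthonormal basis $(e_n)$.

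In detail: since $\mathcal{H}$ is real, $\inf_{|\alpha|=1}\|f-\alpha g\|=\min(\|f-g\|,\|f+g\|)\le\|f-g\|$, and replacing $f$ by $-f$ or $g$ by $-g$ changes neither side of the claimed inequality nor membership in $\mathcal{B}$; so we may assume $\langle f,e_1\rangle\ge0$, $\langle g,e_1\rangle\ge0$, and (the cases $f=0$ or $g=0$ being immediate from the Riesz bound below) that $f,g\ne0$. \emph{Step 1 (concentration).} Writing $f=\sum_n f_ne_n$, the defining estimate of $\mathcal{B}$ with $m=1$ gives $f_1^2=\|f\|^2-\|f-P_1f\|^2\ge(1-\beta_2^2)\|f\|^2$, hence $f_1>\tfrac12\|f\|$ since $\beta_2<\alpha_2/2<\vp/4<\tfrac12$; and with $m=n-1$ it gives $|f_n|\le\|f-P_{n-1}f\|\le\beta_n\|f\|<\tfrac{\alpha_n}{2}\|f\|<\alpha_nf_1$ for every $n\ge2$. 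Thus $\langle f,\varphi_1\rangle=f_1>0$ and $\langle f,\varphi_n\rangle=\alpha_nf_1+f_n\ge\alpha_nf_1-|f_n|>0$ for all $n\ge2$, so $T_\Phi f$ is coordinatewise positive, and likewise $T_\Phi g$. \emph{Step 2 (linearization).} Coordinatewise positivity of $T_\Phi f$ and $T_\Phi g$ yields $|\langle f,\varphi_n\rangle|-|\langle g,\varphi_n\rangle|=\langle f-g,\varphi_n\rangle$ for every $n$, hence $\mathcal{A}_\Phi(f)-\mathcal{A}_\Phi(g)=T_\Phi(f-g)$.

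\emph{Step 3 (lower Riesz bound).} For any $h=\sum_n h_ne_n\in\mathcal{H}$ set $v=(h_n)_{n\ge2}$ and $a=(\alpha_n)_{n\ge2}$; then, using $h_1^2\|a\|^2\ge0$, the Cauchy--Schwarz inequality, $2|h_1|\,\|v\|\le h_1^2+\|v\|^2=\|h\|^2$, and $\|a\|\le\sum_{n\ge2}\alpha_n<\vp/2$,
\[
\|T_\Phi h\|^2=h_1^2+\|h_1a+v\|^2=\|h\|^2+h_1^2\|a\|^2+2h_1\langle a,v\rangle\ge\|h\|^2-\|a\|\,\|h\|^2\ge(1-\vp)\|h\|^2 .
\]
Hence $\|T_\Phi h\|\ge(1-\vp)^{1/2}\|h\|$ for all $h$; applying this to $h=f-g$ and combining with Step 2,
\[
\inf_{|\alpha|=1}\|f-\alpha g\|\le\|f-g\|\le(1-\vp)^{-1/2}\|T_\Phi(f-g)\|=(1-\vp)^{-1/2}\|\mathcal{A}_\Phi(f)-\mathcal{A}_\Phi(g)\| .
\]

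The substance of the argument is entirely in Step 1: one must check that $\mathcal{B}$ has been arranged so that no coefficient $\alpha_nf_1+f_n$ can vanish or change sign anywhere on $\mathcal{B}$ — which is precisely what $\beta_n<\alpha_n/2$ buys, together with the fact that $f_1$ dominates $\|f\|$ (needing only $\beta_2<\tfrac12$) — and this is exactly what forces $\mathcal{A}_\Phi$ to restrict to a \emph{linear} map on $\mathcal{B}$. The complementary hypothesis $\sum_{n\ge2}\alpha_n<\vp/2$ then only has the mild job of keeping $(\varphi_n)$ within $(1-\vp)^{1/2}$ of orthonormal. I expect no genuine difficulty beyond making these two bookkeeping inequalities line up; everything else is routine.
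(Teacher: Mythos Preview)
Your proof is correct and follows essentially the same approach as the paper: both establish that every frame coefficient $\langle f,\varphi_n\rangle$ is positive for $f\in\mathcal{B}$ (your Step~1 is identical to the paper's argument), whence $\mathcal{A}_\Phi(f)-\mathcal{A}_\Phi(g)=T_\Phi(f-g)$, and then bound $\|T_\Phi(f-g)\|$ below by $(1-\vp)^{1/2}\|f-g\|$. The only difference is organizational: you isolate the lower Riesz bound $\|T_\Phi h\|^2\ge(1-\vp)\|h\|^2$ as a general fact via Cauchy--Schwarz, whereas the paper carries out the same estimate directly on $h=f-g$ by termwise expansion.
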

\begin{proof}
Let $f,g\in\mathcal{B}$ with basis expansions $f=\sum_{n=1}^\infty a_n e_n$ and $g=\sum_{n=1}^\infty b_n e_n$.  By the definition of $\mathcal{B}$, we have $a_1=0$
if and only if $f=0$. Thus, by scaling, we may assume that $a_1>0$ and $b_1>0$.
We have that
$$a_1=\|P_1 f\|\geq \|f\|-\|f-P_1f\|\geq (1-\beta_2)\|f\|\geq (1-4^{-1}\vp)\|f\|\geq 2^{-1}\|f\|$$
and for all $n\geq 2$ we have 
$$\beta_n \|f\|\geq \|f-P_{n-1}f\|\geq |a_n|.$$
Hence, $2\beta_n a_1\geq |a_n|$. As $\beta_n\leq 2^{-1}\alpha_n$, it follows that $\alpha_n a_1\geq |a_n|$.
Thus, for all $n\geq2$ we have that
$$|\langle f,\varphi_n\rangle|=|\alpha_n a_1+a_n|=\alpha_n a_1+a_n.
$$
Likewise, we have that $|\langle g,\varphi_n\rangle|=|\alpha_n b_1+b_n|=\alpha_n b_1+b_n
$ for all $n\geq 2$.
Hence, we may compute that
\begin{align*}
\|\mathcal{A}_\Phi(f)-\mathcal{A}_\Phi(g)\|^2&=\sum_{n=1}^\infty \big||\langle f,\varphi_n\rangle|-|\langle g,\varphi_n\rangle|\big|^2\\
&=|a_1-b_1|^2+\sum_{n=2}^\infty \big|(\alpha_n a_1+a_n)-(\alpha_n b_1+b_n)\big|^2\\
&\geq|a_1-b_1|^2+\sum_{n=2}^\infty \big| |a_n-b_n|-\alpha_n |a_1-b_1|\big|^2\\
&\geq |a_1-b_1|^2+\sum_{n=2}^\infty |a_n-b_n|^2-2|a_1-b_1|\alpha_n|a_n-b_n|\\
&= \|f-g\|^2-\sum_{n=2}^\infty 2|a_1-b_1|\alpha_n|a_n-b_n|\\
&\geq \|f-g\|^2-\sum_{n=2}^\infty 2\alpha_n\|f-g\|^2\\
&\geq \|f-g\|^2-\vp\|f-g\|^2.
\end{align*}
It follows that
$$\inf_{|\alpha|=1}\|f-\alpha g\|\leq \|f-g\|\leq (1-\vp)^{-1/2}\|\mathcal{A}_\Phi(f)-\mathcal{A}_\Phi(g)\|.
$$

\end{proof}

It is clear that in \Cref{T:R}, $(\varphi_n)_{n=1}^\infty$ does not do phase retrieval on $\mathcal{H}$ because no basis does phase retrieval for a Hilbert space.  However, we also have for each $m\geq 2$ that $(P_{V_m}\varphi_n)_{n=1}^\infty$ does not do phase retrieval on $V_m$.  This is in stark contrast to \cite[Theorem 2.7]{MR3554699} (\Cref{CCD Thm} in the current paper), where they require that $(P_{V_m}\varphi_n)_{n=1}^\infty$ does phase retrieval on $V_m$ and make explicit use of the stability constants in their proof.  Hence, we obtain Lipschitz stability with even weaker conditions than the hypothesis for \cite[Theorem 2.7]{MR3554699}.  Note that by including additional frame vectors in the construction in \Cref{T:R}, one can obtain a frame which does Lipschitz stable phase retrieval on $\mathcal{B}$ and does phase retrieval on each $V_m$, so satisfies the conditions in \cite[Theorem 2.7]{MR3554699}.
\medskip

We now prove that it is possible to obtain Lipschitz stability in the complex case, which is more delicate.
\begin{theorem}\label{T:C}
Let $(e_n)_{n=1}^\infty$ be an orthonormal basis for an infinite-dimensional complex Hilbert space $\mathcal{H}$. For each $m\in\N$ let $V_m=\textrm{span}_{1\leq n\leq m}e_n$ and let $P_m:\mathcal{H}\rightarrow V_m$ be the orthogonal projection. Choose $\alpha_n\searrow 0$ such that $\sum_{n=2}^\infty \alpha_n<1/200$ and let $\beta_n\searrow 0$ be such that $\beta_n<2^{-1}\alpha^2_n$ for all $n\in\N$.    Consider the frame $e_1\cup(\varphi_{n,1})_{n=2}^\infty\cup(\varphi_{n,i})_{n=2}^\infty$ of $\mathcal{H}$ given by $\varphi_{n,1}=\alpha_n e_1+e_n$ and $\varphi_{n,i}=\alpha_n e_1+ie_n$ for all $n\geq 2.$ Define the subset
\begin{equation*}
\mathcal{B}=\{f\in \mathcal{H} : \|f-P_mf\|\leq \beta_{m+1}\|f\|\ \text{for every}\ m\in \mathbb{N}\}.
\end{equation*}
Then for all $f,g\in \mathcal{B}$ we have 
\begin{equation*}
\inf_{|\alpha|=1}\|f-\alpha g\|\leq 5\|\mathcal{A}_\Phi(f)-\mathcal{A}_\Phi(g)\|.
\end{equation*}
\end{theorem}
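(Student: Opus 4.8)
The plan is to follow the blueprint of \Cref{T:R}, but with an extra layer of care to cope with the fact that $|\langle f,\varphi_{n,1}\rangle|$ and $|\langle f,\varphi_{n,i}\rangle|$ are now only \emph{approximately}, rather than exactly, affine functions of the coordinates of $f$. As in the real case I would begin with the reductions: the cases $f=0$ or $g=0$ are trivial (for $h\in\mathcal B$ one has $\|\mathcal{A}_\Phi(h)\|\ge|\langle h,e_1\rangle|\ge(1-\beta_2)\|h\|$), and since both $\mathcal{A}_\Phi$ and $\inf_{|\alpha|=1}\|f-\alpha g\|$ are invariant under replacing $f,g$ by unimodular multiples, I would normalize $a_1:=\langle f,e_1\rangle>0$ and $b_1:=\langle g,e_1\rangle>0$, where $f=\sum a_ne_n$, $g=\sum b_ne_n$; by the symmetry of the desired inequality in $f$ and $g$, I would also assume $a_1\ge b_1$. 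Membership in $\mathcal B$ gives $a_1=\|P_1f\|\ge(1-\beta_2)\|f\|>\tfrac12\|f\|$, and for $n\ge2$, $a_n=\langle f-P_{n-1}f,e_n\rangle$, so that $|a_n|\le\beta_n\|f\|\le\beta_n(1-\beta_2)^{-1}a_1\le\alpha_n^2a_1$, and likewise $|b_n|\le\alpha_n^2b_1$. The \emph{quadratic} decay $|a_n|\le\alpha_n^2a_1$, which is exactly what the hypothesis $\beta_n<\tfrac12\alpha_n^2$ buys (in contrast to the merely linear decay used in \Cref{T:R}), is what drives the complex case.

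Next I would linearize the measurements. Writing $a_n=p_n+iq_n$, $b_n=r_n+is_n$, one has $\langle f,\varphi_{n,1}\rangle=a_1\alpha_n+a_n$ and $\langle f,\varphi_{n,i}\rangle=a_1\alpha_n-ia_n$, hence
\[
|\langle f,\varphi_{n,1}\rangle|=\sqrt{(a_1\alpha_n+p_n)^2+q_n^2},\qquad |\langle f,\varphi_{n,i}\rangle|=\sqrt{(a_1\alpha_n+q_n)^2+p_n^2},
\]
where, since $|a_n|\le\alpha_n^2a_1$, the quantities $a_1\alpha_n+p_n$ and $a_1\alpha_n+q_n$ lie in $[a_1\alpha_n(1-\alpha_n),a_1\alpha_n(1+\alpha_n)]$ and in particular are positive. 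Setting
\[
\delta_n^{(f)}:=|\langle f,\varphi_{n,1}\rangle|-(a_1\alpha_n+p_n)=\frac{q_n^2}{\,|\langle f,\varphi_{n,1}\rangle|+a_1\alpha_n+p_n\,}\ \in\ \Bigl[0,\ \frac{q_n^2}{2(a_1\alpha_n+p_n)}\Bigr],
\]
and defining $\delta_n^{(g)}$ analogously, I obtain
\[
|\langle f,\varphi_{n,1}\rangle|-|\langle g,\varphi_{n,1}\rangle|=\bigl[(a_1-b_1)\alpha_n+(p_n-r_n)\bigr]+\bigl(\delta_n^{(f)}-\delta_n^{(g)}\bigr),
\]
together with the analogous identity for $\varphi_{n,i}$ (with the roles of $p,q$ and of $r,s$ interchanged). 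The heart of the proof — and the step I expect to be the main obstacle — is to show that the error difference is controlled by the \emph{distance} $\|f-g\|$, not merely by $\|f\|+\|g\|$: a naive bound only gives $|\delta_n^{(f)}|\le\tfrac12\alpha_n^3a_1$, which scales with $\|f\|$ and yields nothing better than Hölder stability. Instead, writing $\delta_n^{(f)}-\delta_n^{(g)}=(q_n^2D_n^{(g)}-s_n^2D_n^{(f)})/(D_n^{(f)}D_n^{(g)})$ with $D_n^{(f)}:=|\langle f,\varphi_{n,1}\rangle|+a_1\alpha_n+p_n$ (so $D_n^{(f)},D_n^{(g)}$ are comparable to $a_1\alpha_n$, resp.\ $b_1\alpha_n$), expanding the numerator as $(q_n^2-s_n^2)D_n^{(g)}+s_n^2(D_n^{(g)}-D_n^{(f)})$, and using $|q_n^2-s_n^2|\le(|q_n|+|s_n|)|a_n-b_n|\le\alpha_n^2(a_1+b_1)|a_n-b_n|$, $|D_n^{(f)}-D_n^{(g)}|\le2\alpha_n|a_1-b_1|+2|a_n-b_n|$, $|s_n|\le\alpha_n^2b_1$, and the assumption $a_1\ge b_1$ (so that $(a_1+b_1)/a_1\le2$ and $b_1/a_1\le1$ in the final division), one arrives at
\[
\bigl|\delta_n^{(f)}-\delta_n^{(g)}\bigr|\le 3\alpha_n|a_n-b_n|+\alpha_n^3|a_1-b_1|,
\]
and the analogous bound for the $\varphi_{n,i}$ error.

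Finally I would assemble the estimate. From the linearized identity, $p_n-r_n$ equals the observed difference $A_n:=|\langle f,\varphi_{n,1}\rangle|-|\langle g,\varphi_{n,1}\rangle|$ minus $(a_1-b_1)\alpha_n$ minus $(\delta_n^{(f)}-\delta_n^{(g)})$, so $(p_n-r_n)^2\le3A_n^2+3(a_1-b_1)^2\alpha_n^2+3(\delta_n^{(f)}-\delta_n^{(g)})^2$, and similarly $(q_n-s_n)^2\le3(A_n')^2+3(a_1-b_1)^2\alpha_n^2+3(\widetilde\delta_n^{(f)}-\widetilde\delta_n^{(g)})^2$, where $A_n':=|\langle f,\varphi_{n,i}\rangle|-|\langle g,\varphi_{n,i}\rangle|$. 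Summing over $n\ge2$ and using $\sum_{n\ge2}\alpha_n^2\le(\sum_{n\ge2}\alpha_n)^2<200^{-2}$, $\sum_{n\ge2}\alpha_n^6<200^{-6}$, $\sum_{n\ge2}|a_n-b_n|^2\le\|f-g\|^2$, $\sum_{n\ge2}(A_n^2+(A_n')^2)\le\|\mathcal{A}_\Phi(f)-\mathcal{A}_\Phi(g)\|^2$, together with the crucial observation that $(a_1-b_1)^2=(|a_1|-|b_1|)^2\le\|\mathcal{A}_\Phi(f)-\mathcal{A}_\Phi(g)\|^2$, every error term becomes either a negligibly small multiple of $\|f-g\|^2$ or a negligibly small multiple of $\|\mathcal{A}_\Phi(f)-\mathcal{A}_\Phi(g)\|^2$. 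Adding back the $n=1$ contribution $(a_1-b_1)^2\le\|\mathcal{A}_\Phi(f)-\mathcal{A}_\Phi(g)\|^2$ yields
\[
\|f-g\|^2\le 4\,\|\mathcal{A}_\Phi(f)-\mathcal{A}_\Phi(g)\|^2+\varepsilon_1\|f-g\|^2+\varepsilon_2\|\mathcal{A}_\Phi(f)-\mathcal{A}_\Phi(g)\|^2
\]
with $\varepsilon_1,\varepsilon_2$ tiny; absorbing the $\varepsilon_1\|f-g\|^2$ term on the left gives $\|f-g\|\le C\|\mathcal{A}_\Phi(f)-\mathcal{A}_\Phi(g)\|$ with $C$ comfortably below $5$ (in fact close to $2$). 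Since the phase normalization left the right-hand side unchanged and $\inf_{|\alpha|=1}\|f-\alpha g\|\le\|f-g\|$, this proves the theorem.
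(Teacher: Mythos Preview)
Your proof is correct and takes a genuinely different route from the paper's. The paper never linearizes $|\langle f,\varphi_{n,1}\rangle|$; instead it computes the \emph{squared} modulus exactly,
\[
|\langle f,\varphi_{n,1}\rangle|^2=\alpha_n^2a_1^2+2\alpha_na_1\,\Re a_n+|a_n|^2,
\]
and uses the difference--of--squares trick: since $|\langle f,\varphi_{n,1}\rangle|+|\langle g,\varphi_{n,1}\rangle|\le4\alpha_n$, one obtains
\[
16\bigl||\langle f,\varphi_{n,1}\rangle|-|\langle g,\varphi_{n,1}\rangle|\bigr|^2\ge\bigl|\alpha_n(a_1^2-b_1^2)+2(a_1\Re a_n-b_1\Re b_n)+\alpha_n^{-1}(|a_n|^2-|b_n|^2)\bigr|^2,
\]
from which $|\Re a_n-\Re b_n|^2-26\alpha_n\|f-g\|^2$ is extracted by elementary inequalities (and analogously for $\varphi_{n,i}$); summing gives $\|\mathcal A_\Phi(f)-\mathcal A_\Phi(g)\|^2\ge 25^{-1}\|f-g\|^2$. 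Your approach is the ``dual'' one: you keep the modulus as is, write $|\langle f,\varphi_{n,1}\rangle|=(a_1\alpha_n+p_n)+\delta_n^{(f)}$, and the nontrivial work goes into bounding the \emph{difference} $\delta_n^{(f)}-\delta_n^{(g)}$ by a multiple of $\|f-g\|$ rather than of $\|f\|+\|g\|$ (your remark that the naive bound on $\delta_n^{(f)}$ alone would only give H\"older stability is exactly the right diagnosis). The paper's squaring trick avoids any explicit manipulation of $\sqrt{\,\cdot\,}$ and keeps the bookkeeping shorter, at the cost of the factor $16$ that ultimately produces the constant~$5$; your linearization is slightly more laborious but yields a sharper constant (near~$2$) and makes the mechanism---that $\varphi_{n,1}$ isolates $\Re a_n$ and $\varphi_{n,i}$ isolates $\Im a_n$ up to a controllable error---completely transparent.
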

\begin{proof}

Let $f,g\in\mathcal{B}$ with $f=(a_n)_{n=1}^\infty$ and $g=(b_n)_{n=1}^\infty$.  By the definition of $\mathcal{B}$, we have $a_1=0$ if and only if $f=0$. Thus, by scaling, we may assume that $a_1,b_1\in\R$ satisfy $a_1, b_1>0$ and that $\|f\|=1$ and $0\leq \|g\|\leq 1$. 
It is easy to see that
\begin{align*}
|\langle f,\varphi_{n,1}\rangle|^2&=|\alpha_n a_1+a_n|^2\\
&=(\alpha_n a_1+\Re a_n)^2+(\Im a_n)^2\\
&=\alpha^2_n a^2_1+2\alpha_n a_1\Re a_n+(\Re a_n)^2+(\Im a_n)^2\\
&=\alpha^2_n a^2_1+2\alpha_n a_1\Re a_n+|a_n|^2.
\end{align*}
Likewise, 
$$
|\langle f,\varphi_{n,i}\rangle|^2
=\alpha^2_n a^2_1-2\alpha_n a_1\Im a_n+|a_n|^2.
$$
For the case of $\varphi_{n,1}$ we have  the identity
\begin{equation}\label{E:diff}
\big||\langle f,\varphi_{n,1}\rangle|-|\langle g,\varphi_{n,1}\rangle|\big|=\big|(\alpha^2_n a^2_1+2\alpha_n a_1\Re a_n+|a_n|^2)^{1/2}-(\alpha^2_n b^2_1+2\alpha_n b_1\Re b_n+|b_n|^2)^{1/2}\big|.
\end{equation}
As in the proof of \Cref{T:R}, we have that  $1\geq a_1\geq 0$ and $\alpha_n a_1\geq |a_n|$. This implies the inequality $2\alpha_n\geq (\alpha^2_n a^2_1+2\alpha_n a_1\Re a_n+|a_n|^2)^{1/2}$. The corresponding inequality for $b_n$ likewise holds, and we may conclude that
\begin{equation}\label{E:alpha}
4\alpha_n\geq  (\alpha^2_n a^2_1+2\alpha_n a_1\Re a_n+|a_n|^2)^{1/2}+(\alpha^2_n b^2_1+2\alpha_n b_1\Re b_n+|b_n|^2)^{1/2}.
\end{equation}
By multiplying \eqref{E:diff} and \eqref{E:alpha}, we obtain
$$4\alpha_n\big||\langle f,\varphi_{n,1}\rangle|-|\langle g,\varphi_{n,1}\rangle|\big|\geq \big|\alpha^2_n a^2_1+2\alpha_n a_1\Re a_n+|a_n|^2-(\alpha^2_n b^2_1+2\alpha_n b_1\Re b_n+|b_n|^2)\big|.
$$
Thus, we have that
\begin{equation}\label{E:1}
16\big||\langle f,\varphi_{n,1}\rangle|-|\langle g,\varphi_{n,1}\rangle|\big|^2\geq \big|\alpha_n (a^2_1-b^2_1)+2(a_1\Re a_n-b_1\Re b_n)+\alpha_n^{-1}(|a_n|^2-|b_n|^2)\big|^2.
\end{equation}
Note that $0\leq a_1,b_1\leq 1$ and $0\leq |a_n|,|b_n|\leq 2^{-1}\alpha_n^2$ for all $n\geq 2$.
This allows us to derive the following three inequalities:
\begin{equation}\label{E:2}
|a_1^2-b_1^2|=(a_1+b_1)|a_1-b_1|\leq 2\|f-g\|,
\end{equation}
\begin{equation}\label{E:3}
\big||a_n|^2-|b_n|^2\big|=(|a_n|+|b_n|)\big||a_n|-|b_n|\big|\leq \alpha_n^2\|f-g\|,
\end{equation}
\begin{equation}\label{E:4}
|a_1\Re a_n-b_1\Re b_n|=|a_1(\Re a_n-\Re b_n)+(a_1-b_1)\Re b_n|\leq 2\|f-g\|.
\end{equation}
By \eqref{E:1}, \eqref{E:2}, \eqref{E:3} and \eqref{E:4} we deduce that 
\begin{align*}
16\big||\langle &f,\varphi_{n,1}\rangle|-|\langle g,\varphi_{n,1}\rangle|\big|^2
\geq 4|a_1\Re a_n-b_1\Re b_n|^2-4|a_1\Re a_n-b_1\Re b_n|\big(\alpha_n |a^2_1-b^2_1|+\alpha_n^{-1}\big||a_n|^2-|b_n|^2\big|\big)\\
&\geq 4|a_1\Re a_n-b_1\Re b_n|^2
-8\|f-g\|\big(2\alpha_n \|f-g\|+\alpha_n\|f-g\|\big)\\
&= 4|a_1\Re a_n-b_1\Re b_n|^2
-24\alpha_n\|f-g\|^2\\
&= 4|a_1(\Re a_n-\Re b_n)+(a_1-b_1)\Re b_n|^2
-24\alpha_n\|f-g\|^2\\
&\geq 4\left(a_1^2(\Re a_n-\Re b_n)^2-2a_1|\Re a_n-\Re b_n||a_1-b_1||\Re b_n|\right)
-24\alpha_n\|f-g\|^2\\
&\geq |\Re a_n-\Re b_n|^2-2\alpha_n\|f-g\|^2
-24\alpha_n\|f-g\|^2\hspace{.7cm}\textrm{ as $1\geq a_1\geq .5$ and }1\geq b_1\geq 0,\\
&= |\Re a_n-\Re b_n|^2
-26\alpha_n\|f-g\|^2.
\end{align*}
The same argument with $\varphi_{n,i}$ instead of $\varphi_{n,1}$ gives that 
$$16\big||\langle f,\varphi_{n,i}\rangle|-|\langle g,\varphi_{n,i}\rangle|\big|^2
\geq |\Im a_n-\Im b_n|^2-26\alpha_n\|f-g\|^2.
$$
Hence, 
\begin{align*}
\|\mathcal{A}_\Phi(f)-\mathcal{A}_\Phi(g)\|^2&=\big||\langle f,e_{1}\rangle|-|\langle g,e_{1}\rangle|\big|^2+\sum_{n=2}^\infty\left( \big||\langle f,\varphi_{n,1}\rangle|-|\langle g,\varphi_{n,1}\rangle|\big|^2+ \big||\langle f,\varphi_{n,i}\rangle|-|\langle g,\varphi_{n,i}\rangle|\big|^2\right)\\
&\geq|a_1-b_1|^2+16^{-1}\sum_{n=2}^\infty|\Re a_n-\Re b_n|^2+|\Im a_n-\Im b_n|^2-52\alpha_n\|f-g\|^2\\
&= |a_1-b_1|^2+16^{-1}\sum_{n=2}^\infty |a_n-b_n|^2-52\alpha_n\|f-g\|^2\\
&\geq 16^{-1}\|f-g\|^2-(52/16)\sum_{n=2}^\infty \alpha_n\|f-g\|^2\\
&\geq 25^{-1}\|f-g\|^2\hspace{.5cm}\textrm{ as }\sum_{n=2}^\infty \alpha_n<1/200.
\end{align*}
This completes the proof.
\end{proof}

\section{Multidimensional examples}\label{S:M}

Note that the instabilities that occur in \eqref{gliding hump} are obtained by ``pushing bumps to infinity". In many practical situations, such instabilities would be considered to be unphysical. For example, when recording data from a power spectrum, the spectrum will be highly localized to the compact region of space where the experiment is taking place, with tails that are rapidly decaying away from this region. Experimentally, one is primarily interested in recovering the main localized piece, so instabilities caused by the decaying tails should not be considered as a fundamental obstruction to recovering the signal.
\medskip

  To model such scenarios, we consider a mathematical setting that is very similar to the above. We fix a filtration $V_1\subsetneq V_2\subsetneq\ldots \subsetneq \mathcal{H}$ of an infinite-dimensional Hilbert space $\mathcal{H}$ by finite-dimensional linear subspaces $(V_m)_{m=1}^\infty$ with $V_1$ doing stable phase retrieval. We then define $\mathcal{B}$ to be the set of all vectors $f\in \mathcal{H}$ for which $\|f-P_mf\|_{\mathcal{H}}$ goes to zero at a sufficiently fast rate. Here, we view $V_1$ as the space where the bulk of the physically relevant signals are localized, with the rest of the Hilbert space being used to model perturbations of this main finite-dimensional piece. By choosing the rates appropriately, the enforced decay of the tails will prevent the image of the signals in $\mathcal{B}$ from migrating towards infinity as in \eqref{gliding hump}, making stable phase retrieval conceivable for these models.  In Theorems \ref{T:R} and \ref{T:C} we obtained stability for phase retrieval by forcing the set of vectors $\mathcal{B}$ to be highly concentrated around a fixed one-dimensional subspace.  Of course, in applications the subspace $V_1$ could have arbitrarily large dimension.  In the remainder of this section, we will prove that stable phase retrieval for such sets $\mathcal{B}$ is still possible for $V_1$ having any finite dimension.

\begin{theorem}\label{T:RMD}
Let $V_1$ be a finite-dimensional subspace of a separable infinite-dimensional real Hilbert space $\mathcal{H}$ and let $(e_n)_{n=2}^\infty$ be an orthonormal basis of $V_1^\perp$. For each $m\in\N$, let $V_m=V_1+\textrm{span}_{2\leq n\leq m}e_n$ and let $P_m:\mathcal{H}\rightarrow V_m$ be the orthogonal projection.
Let $(\psi_j)_{j\in I}\subseteq V_1$ be a frame of $V_1$ with frame bounds $0<A\leq B\leq 1$ which does $C$-stable phase retrieval on $V_1$.  Let $(\varphi_j)_{j\in J}\subseteq V_1$ be a frame of $V_1$ with frame bounds $0<A\leq B\leq 1$ for which there exists $c>0$ so that, for all $x\in V_1$, if $J_c(x)=\{j\in J: |\langle x,\varphi_j\rangle|\geq c\|x\|\}$ then $|J_c|\geq C^{-1}|J|$.
Fix $0<\vp<8^{-1}$ and choose $\alpha_n\searrow 0$ such that $\sum_{n=2}^\infty \alpha_n <2^{-2}C^{-1}|J|^{-1/2}\vp$ and $\beta_n\searrow 0$ such that $\beta_n<2^{-6}c^2\alpha_n$ for all $n\geq 2$.   Define the subset
\begin{equation*}
\mathcal{B}=\{f\in \mathcal{H} : \|f-P_mf\|\leq \beta_{m+1}\|f\|\ \text{for every}\ m\in \mathbb{N}\}
\end{equation*} 
 and consider the frame $\Phi:=(\psi_{j})_{j\in I}\cup(\varphi_{j,n})_{j\in J, n\geq 2}$ of $\mathcal{H}$ where $\varphi_{j,n}:=\alpha_{n} \varphi_j+|J|^{-1/2}e_n$ for all $j\in J$ and $n\geq 2$. Then $\Phi$ has lower frame bound $(1-\sqrt{\frac{\vp}{A}})^2A$ and upper frame bound $(1+\sqrt{\vp})^2$.  Furthermore,
for all $f,g\in \mathcal{B}$ we have
\begin{equation}\label{Conclusion}
\inf_{|\alpha|=1}\|f-\alpha g\|^2\leq (1-\vp)^{-1}C^2\|\mathcal{A}_\Phi(f)-\mathcal{A}_\Phi(g)\|^2.
\end{equation}
\end{theorem}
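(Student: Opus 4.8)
\emph{Frame bounds.} First I would note that the unperturbed system $(\psi_j)_{j\in I}\cup(|J|^{-1/2}e_n)_{j\in J,\,n\ge2}$ is a frame of $\mathcal H$ with bounds $A$ and $1$: it splits over $\mathcal H=V_1\oplus V_1^\perp$, with the $(\psi_j)$ producing a quantity in $[A\|P_1f\|^2,B\|P_1f\|^2]$ and the $|J|$ copies of $|J|^{-1/2}e_n$ reproducing $\|f-P_1f\|^2$ exactly, and $B\le1$. Since $\varphi_{j,n}-|J|^{-1/2}e_n=\alpha_n\varphi_j$, the analysis operator of the difference has norm at most $\big(\sum_{n\ge2}\alpha_n^2\big)^{1/2}\sqrt B\le\sum_{n\ge2}\alpha_n<\sqrt\varepsilon$, so the Paley--Wiener stability theorem for frames yields the stated bounds $(1-\sqrt{\varepsilon/A})^2A=(\sqrt A-\sqrt\varepsilon)^2$ and $(1+\sqrt\varepsilon)^2$.

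\emph{Reduction for \eqref{Conclusion}.} Both sides are symmetric in $f,g$ and $2$-homogeneous, so I would normalise $\|f\|=1\ge\|g\|$; the case $g=0$ is immediate from $C$-stable phase retrieval of $(\psi_j)$ on $V_1$ applied to $(P_1f,0)$ together with $\|f-P_1f\|\le\beta_2$ and $\beta_2^2<\varepsilon$. So assume $f,g\neq0$ and write $f=f_1+\sum_{n\ge2}a_ne_n$, $g=g_1+\sum_{n\ge2}b_ne_n$ with $f_1,g_1\in V_1$; from $f,g\in\mathcal B$ one reads off $\|f_1\|\ge1-\beta_2\ge\tfrac12$, $|a_n|\le\beta_n$, $|b_n|\le\beta_n$ for $n\ge2$, and $D:=\sum_{n\ge2}|a_n-b_n|^2\le4\big(\sum_{n\ge2}\beta_n\big)^2$. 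Replacing $g$ by $-g$ if needed I may assume $\langle f_1,g_1\rangle\ge0$; then, using $\langle f,\psi_j\rangle=\langle f_1,\psi_j\rangle$, $C$-stable phase retrieval gives $\sum_{j\in I}\big||\langle f,\psi_j\rangle|-|\langle g,\psi_j\rangle|\big|^2\ge C^{-2}\|f_1-g_1\|^2$, while $\inf_{|\alpha|=1}\|f-\alpha g\|^2\le\|f-g\|^2=\|f_1-g_1\|^2+D$. Hence it suffices to prove $\|\mathcal A_\Phi(f)-\mathcal A_\Phi(g)\|^2\ge C^{-2}(1-\varepsilon)(\|f_1-g_1\|^2+D)$, and I would split on the size of $\|f_1-g_1\|$.

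\emph{The easy and the hard case.} If $\|f_1-g_1\|\ge c/8$, then $D$ (of size $O(\varepsilon^2)$) is negligible against $\|f_1-g_1\|^2\ge c^2/64$, so $D\le\frac{\varepsilon}{1-\varepsilon}\|f_1-g_1\|^2$ and the $(\psi_j)$-estimate alone finishes it. The substance is $\|f_1-g_1\|<c/8$: here I restrict the sum over $(\varphi_{j,n})$ to $j\in J_c(f_1)$, which has $\ge C^{-1}|J|$ elements. For such $j$, since $|\langle f_1,\varphi_j\rangle|\ge c\|f_1\|\ge c/2$ and $\beta_n<2^{-6}c^2\alpha_n$, the term $\alpha_n\langle f_1,\varphi_j\rangle$ strictly dominates $|J|^{-1/2}a_n$, giving $|\langle f,\varphi_{j,n}\rangle|=\alpha_n|\langle f_1,\varphi_j\rangle|+|J|^{-1/2}s_ja_n$ with $s_j=\sign\langle f_1,\varphi_j\rangle$; and $\|f_1-g_1\|<c/8$ forces $|\langle g_1,\varphi_j\rangle|\ge 3c/8$ and $\sign\langle g_1,\varphi_j\rangle=s_j$ (otherwise $|\langle f_1-g_1,\varphi_j\rangle|=|\langle f_1,\varphi_j\rangle|+|\langle g_1,\varphi_j\rangle|>c/8\ge\|f_1-g_1\|\ge|\langle f_1-g_1,\varphi_j\rangle|$, impossible), whence likewise $|\langle g,\varphi_{j,n}\rangle|=\alpha_n|\langle g_1,\varphi_j\rangle|+|J|^{-1/2}s_jb_n$. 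Subtracting, squaring, and discarding a nonnegative square gives
\[
\big||\langle f,\varphi_{j,n}\rangle|-|\langle g,\varphi_{j,n}\rangle|\big|^2\ \ge\ |J|^{-1}\Big((a_n-b_n)^2-2\,|a_n-b_n|\,|J|^{1/2}\alpha_n\,|\langle f_1-g_1,\varphi_j\rangle|\Big).
\]
Summing over $j\in J_c(f_1)$ and $n\ge2$, the main term contributes $\tfrac{|J_c(f_1)|}{|J|}D\ge C^{-1}D$, and the error term, after Cauchy--Schwarz in $j$ (via $\sum_j|\langle f_1-g_1,\varphi_j\rangle|\le|J|^{1/2}\|f_1-g_1\|$) and in $n$, is at most $2\big(\sum_{n\ge2}\alpha_n\big)\|f_1-g_1\|\,D^{1/2}\le\big(\sum_{n\ge2}\alpha_n\big)\big(sD+s^{-1}\|f_1-g_1\|^2\big)$ for any $s>0$. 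With the $(\psi_j)$-estimate this gives
\[
\|\mathcal A_\Phi(f)-\mathcal A_\Phi(g)\|^2\ \ge\ \Big(C^{-2}-s^{-1}\!\!\sum_{n\ge2}\alpha_n\Big)\|f_1-g_1\|^2+\Big(C^{-1}-s\!\!\sum_{n\ge2}\alpha_n\Big)D,
\]
and choosing $s$ so that $s^{-1}\sum_{n\ge2}\alpha_n\le C^{-2}\varepsilon$ and $s\sum_{n\ge2}\alpha_n\le C^{-1}-C^{-2}(1-\varepsilon)$ — possible precisely because $\sum_{n\ge2}\alpha_n<2^{-2}C^{-1}|J|^{-1/2}\varepsilon$ — makes both coefficients at least $C^{-2}(1-\varepsilon)$, which is what we want.

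\emph{Main obstacle.} The delicate point is the case $\|f_1-g_1\|<c/8$: one must argue that restricting the frame sum to the ``visible'' index set $J_c(f_1)$ loses nothing for $g$ either — this is where the hypothesis on $(\varphi_j)$ and the closeness of $f_1,g_1$ combine to produce a clean, sign-consistent expansion for both $f$ and $g$ — and then keep the cross error term, which inevitably couples $\|f_1-g_1\|$ to $D$, small enough to be absorbed by the $(1-\varepsilon)^{-1}$ slack. This is exactly the role of the calibrated smallness of $\sum_{n\ge2}\alpha_n$ relative to $C,|J|,\varepsilon$ and of $\beta_n<2^{-6}c^2\alpha_n$; tracking these constants is the most technical part, but conceptually routine once the case split and the sign analysis are in place.
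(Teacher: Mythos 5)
Your proposal is correct and follows essentially the same route as the paper's proof: split according to the size of the $V_1$-discrepancy, use the $C$-stable frame $(\psi_j)_{j\in I}$ for that component, and in the complementary case exploit sign consistency over $J_c(P_{V_1}f)$ together with the dominance forced by $\beta_n<2^{-6}c^2\alpha_n$ to linearize $\big||\langle f,\varphi_{j,n}\rangle|-|\langle g,\varphi_{j,n}\rangle|\big|$ and then absorb the cross term using the smallness of $\sum_{n\ge2}\alpha_n$. Your variations (case-split threshold $c/8$ in place of $\varepsilon^{1/2}$, and Cauchy--Schwarz plus a weighted AM--GM in the absorption step) are minor refinements rather than a different method, and the constant bookkeeping in your final choice of $s$ is no looser than that in the paper's own concluding inequality.
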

\begin{remark}
Note that the parameter $c$ does not appear quantitatively in the conclusion \eqref{Conclusion}. Instead, it is used to enforce the rapid convergence of $\beta_n$ relative to $\alpha_n$, which in turn guarantees that $\sign \langle f,\varphi_j\rangle=\sign\langle f,\varphi_{j,n}\rangle$.
\end{remark}
\begin{proof}
Let $f,g\in\mathcal{B}$ with $\|f\|=1$ and $\|g\|\leq 1$. Without loss of generality, we may assume that $\langle P_1 f,P_1 g\rangle\geq0$.
\medskip

We first consider the case that $\|P_1 f- P_1 g\|^2\geq \vp$.
In this case, as $(\psi_j)_{j\in J}$ does $C$-stable phase retrieval on $V_1$, we have that
\begin{align*}
\|f-g\|^2&=\|P_{V_1} f-P_{V_1} g\|^2+\|P_{V^\perp_1} f-P_{V^\perp_1} g\|\\
&<\|P_{V_1} f-P_{V_1} g\|^2+4\beta_2^2\\
&<\|P_{V_1} f-P_{V_1} g\|^2+\vp^2\\
&<(1+\vp)\|P_{V_1} f-P_{V_1} g\|^2\hspace{1cm}\textrm{ as }\|P_{V_1} f-P_{V_1} g\|^2>\vp,\\
&\leq (1-\vp)^{-1}C^2\|\mathcal{A}_\Phi(f)-\mathcal{A}_\Phi(g)\|^2.
\end{align*}

We now  consider the case that $\|P_1 f- P_1 g\|^2\leq \vp$.  Let $j\in J_{c}(P_1 f)$ and $n\geq 2$.  We have that $|\langle P_1 f,\varphi_j\rangle|\geq c\|P_1 f\|\geq c(1-\beta_2)$.  Note that $\|\varphi_j\|\leq 1$ as $(\varphi_j)_{j\in J}$ has upper frame bound $B\leq 1$.  As $\vp^{1/2}<c(1-\beta_2)$, this gives that $|\langle P_1 g,\varphi_j\rangle|\geq c(1-\beta_2)-\vp^{1/2}$ and $\text{sign}(\langle P_1 f,\varphi_j\rangle)=\text{sign}(\langle P_1 g,\varphi_j\rangle)$. Consider now the expansions $$P_{V_1^\perp}f=\sum_{n=2}^\infty a_n e_n \ \ \text{and}\ \  P_{V_1^\perp}g=\sum_{n=2}^\infty b_n e_n.$$ 
This gives that
\begin{align*}
|\langle f,\varphi_{j,n}\rangle-\alpha_n\langle P_1 f,\varphi_{j}\rangle|&=|\langle f,\alpha_n\varphi_{j}+|J|^{-1/2}e_n\rangle-\alpha_n\langle P_1 f,\varphi_{j}\rangle|\\
&=|J|^{-1/2} |a_n|\\
&\leq |J|^{-1/2} \beta_{n+1}\\
&\leq |J|^{-1/2} \beta_{n+1}(1-\beta_2)^{-1}c^{-1}|\langle P_1 f,\varphi_j\rangle|\\ 
&<2^{-1}\alpha_n|\langle P_1 f,\varphi_j\rangle|.
\end{align*}
Thus, $\text{sign}(\langle f,\varphi_{j,n}\rangle)=\text{sign}(\alpha_n\langle P_1 f,\varphi_{j}\rangle)$.  Likewise,
\begin{align*}
|\langle g,\varphi_{j,n}\rangle-\alpha_n\langle P_1 g,\varphi_{j}\rangle|&=|J|^{-1/2} |b_n|\\
&\leq |J|^{-1/2} \beta_{n+1}\\ 
&\leq |J|^{-1/2} \beta_{n+1}(c(1-\beta_2)-\vp^{1/2})^{-1}|\langle P_1 f,\varphi_j\rangle|\\ 
&<2^{-1}\alpha_n|\langle P_1 f,\varphi_j\rangle|.
\end{align*}
Thus, sign$(\langle g,\varphi_{j,n}\rangle)=\text{sign}(\alpha_n\langle P_1 g,\varphi_{j}\rangle)$.  As $\text{sign}(\langle P_1 f,\varphi_j\rangle)=\text{sign}(\langle P_1 g,\varphi_j\rangle)$, we also have that $\text{sign}(\langle f,\varphi_{j,n}\rangle)=\text{sign}(\langle g,\varphi_{j,n}\rangle)$. This gives the key property that $$
||\langle f,\varphi_{j,n}\rangle|-|\langle g,\varphi_{j,n}\rangle||=|\langle f,\varphi_{j,n}\rangle-\langle g,\varphi_{j,n}\rangle|.
$$
We now compute
\begin{align*}
\|\mathcal{A}_\Phi(f)&-\mathcal{A}_\Phi(g)\|^2=\sum_{j\in J} \big||\langle f,\psi_j\rangle|-|\langle g,\psi_j\rangle|\big|^2+\sum_{n=2}^\infty\sum_{j\in J} \big||\langle f,\varphi_{j,n}\rangle|-|\langle g,\varphi_{j,n}\rangle|\big|^2\\
&\geq C^{-2}\|P_1 (f-g)\|^2+\sum_{n=2}^\infty\sum_{j\in J} \big||\langle f,\varphi_{j,n}\rangle|-|\langle g,\varphi_{j,n}\rangle|\big|^2\\
&\geq C^{-2}\|P_1 (f-g)\|^2+\sum_{n=2}^\infty\sum_{j\in J_c(P_1 f)} \big|\langle f,\varphi_{j,n}\rangle-\langle g,\varphi_{j,n}\rangle\big|^2\\
&= C^{-2}\|P_1 (f-g)\|^2+\sum_{n=2}^\infty\sum_{j\in J_c(P_1 f)} \big|\alpha_n\langle P_1 (f-g),\varphi_j\rangle+(a_n -b_n) |J|^{-1/2}\big|^2\\
&\geq C^{-2}\|P_1 (f-g)\|^2+\sum_{n=2}^\infty\sum_{j\in J_c(P_1 f)} \big||a_n -b_n| |J|^{-1/2}-\alpha_n|\langle P_1 (f-g),\varphi_j\rangle|\big|^2\\
&\geq C^{-2}\|P_1 (f-g)\|^2+\sum_{n=2}^\infty\sum_{j\in J_c(P_1 f)} \big||a_n -b_n|^2 |J|^{-1}-2|a_n -b_n| |J|^{-1/2}\alpha_n|\langle P_1 (f-g),\varphi_j\rangle|\big|\\
&\geq C^{-2}\|P_1 (f-g)\|^2+\sum_{n=2}^\infty |a_n -b_n|^2 |J_c(P_1 f)||J|^{-1}-2|a_n -b_n| |J|^{1/2}\alpha_n\| P_1 (f-g)\|\\
&\geq C^{-2}\|P_1 (f-g)\|^2+\sum_{n=2}^\infty C^{-2}|a_n -b_n|^2 -2|J|^{1/2}\alpha_n\| f-g\|^2\\
&= C^{-2}\|f-g\|^2-\sum_{n=2}^\infty 2|J|^{1/2}\alpha_n\| f-g\|^2\\
&\geq C^{-2}\|f-g\|^2-\varepsilon C^{-2}\| f-g\|^2.
\end{align*}

Thus, we have proven \eqref{Conclusion}. Recall that
 $\Phi:=(\psi_{j})_{j\in I}\cup(\varphi_{j,n})_{j\in J, n\geq 2}$  where $\varphi_{j,n}:=\alpha_{n} \varphi_j+|J|^{-1/2}e_n$ for all $j\in J$ and $n\geq 2$. As $(\psi_{j})_{j\in I}$ is a frame of $V_1$ with frame bounds $A\leq B\leq 1$, it follows that 
 $(\psi_{j})_{j\in I}\cup(|J|^{-1/2}e_n)_{j\in J, n\geq 2}$
 has lower frame bound $A$ and upper frame bound $1$. Thus, 
 as $\Phi$ is an $\varepsilon$-perturbation of $(\psi_{j})_{j\in I}\cup(|J|^{-1/2}e_n)_{j\in J, n\geq 2}$, we have that  $\Phi$
 has lower frame bound $(1-\sqrt{\frac{\vp}{A}})^2A$ and upper frame bound $(1+\sqrt{\vp})^2$ by \cite{Christensen}. 
\end{proof}

 Before stating the theorem in the complex case, we set some general conditions. Let $\mathcal{H}$ be a complex Hilbert space and let $V_1\subseteq \mathcal{H}$ be a finite-dimensional subspace with $N=\dim(V_1)$.  We will use a frame $(\varphi_j)_{j\in J}$ of $V_1$, constants $c,\kappa,\vp>0$ and  sequences $(\alpha_n)_{n=1}^\infty,(\beta_n)_{n=1}^\infty$ such that the following conditions hold.
\begin{enumerate}
\item For all $x,y\in V_1$, 
$$|J|\leq \kappa \big|\big\{j\in J: c^{-1}\|x\|\geq \sqrt{N}|\langle x,\varphi_j\rangle|\geq c\|x\|\textrm{ and } c^{-1}\|y\|\geq \sqrt{N}|\langle y,\varphi_j\rangle|\geq c\|y\|\big\}\big|,$$
\item $\sum_{n\geq 2}\alpha_n^2\leq \vp^2 11^{-1}|J|^{-1}c^{-2}\min( C^{-1}, \kappa^{-1}64^{-1}c^2),$
\item $\beta_n<2^{-6}c^2 N^{-1/2} \alpha_n$ for all $n\geq 2$.
\end{enumerate}

\begin{theorem}\label{T:CMD}
Let $V_1$ be a finite-dimensional subspace of a separable infinite-dimensional complex Hilbert space $\mathcal{H}$ and let $(e_n)_{n=2}^\infty$ be an orthonormal basis of $V_1^\perp$. For each $m\in\N$, let $V_m=V_1+\textrm{span}_{2\leq n\leq m}e_n$ and let $P_m:\mathcal{H}\rightarrow V_m$ be the orthogonal projection.
Let $A,B,C>0$ and suppose that $(\psi_j)_{j\in I}$ is a frame of $V_1$ with lower frame bound $A$, upper frame bound $B$, and such that 
$$\inf_{|\alpha|=1}\|x-\alpha y\|^2\leq C\sum_{j\in J_c}\big||\langle x,\psi_j\rangle|-|\langle y,\psi_j\rangle|\big|^2\hspace{.5cm}\textrm{ for all }x,y\in V_1.
$$
Let $N=dim(V_1)$ and  let $(\varphi_{j})_{j\in J}\subseteq V_1$ be a frame of $V_1$ with frame bounds $0<A\leq B\leq 1$ and  let $c,\kappa>0$ and  $0<\vp<\min(8^{-1},A)$ be constants which satisfy the above conditions (i),(ii),(iii).
Define the subset
\begin{equation*}
\mathcal{B}=\{f\in \mathcal{H} : \|f-P_mf\|\leq \beta_{m+1}\|f\|\ \text{for every}\ m\in \mathbb{N}\}.
\end{equation*} 
 Let $\Phi:=(\psi_{j})_{j\in I}\cup(\varphi_{j,n,1})_{j\in J, n\geq 2}\cup(\varphi_{j,n,i})_{j\in J, n\geq 2}$ be the frame of $\mathcal{H}$ where for $j\in J$ and $n\geq 2$,
 $$\varphi_{j,n,1}:=\alpha_{n} \varphi_j+(2|J|)^{-1/2}e_n\textrm{ and }\varphi_{j,n,i}:=\alpha_{n} \varphi_j-(2|J|)^{-1/2}ie_n.$$
 Then 
for all $f,g\in \mathcal{B}$ we have
\begin{equation*}
\inf_{|\alpha|=1}\|f-\alpha g\|^2\leq (1-\vp)^{-1}\max( C, 64\kappa c^{-2})\|\mathcal{A}_\Phi(f)-\mathcal{A}_\Phi(g)\|^2.
\end{equation*}
Furthermore, $\Phi$ has lower frame bound has lower frame bound $(1-\sqrt{\frac{\vp}{A}})^2A$ and upper frame bound $(1+\sqrt{\vp})^2$.   
\end{theorem}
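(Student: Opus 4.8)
The plan is to mimic the proofs of \Cref{T:RMD} and \Cref{T:C}: reduce the $V_1$--part via the stability hypothesis on $(\psi_j)_{j\in I}$, and handle the tail by the ``multiply the difference of moduli by the sum of moduli'' device of \Cref{T:C}, using the pair $\varphi_{j,n,1},\varphi_{j,n,i}$ to recover the real and imaginary parts of one cross term. Since $\|\mathcal{A}_\Phi(f)-\mathcal{A}_\Phi(g)\|$ is phase invariant, I would first rescale so that $\|f\|=1$, $0\le\|g\|\le 1$, and then rotate $g$ by a unimodular scalar so that $\langle P_1f,P_1g\rangle\ge0$; then $\|P_1f-P_1g\|^2=\inf_{|\alpha|=1}\|P_1f-\alpha P_1g\|^2$, so the hypothesis on $(\psi_j)_{j\in I}$ yields $\|P_1(f-g)\|^2\le C\sum_{j\in I}\big||\langle f,\psi_j\rangle|-|\langle g,\psi_j\rangle|\big|^2$. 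Because $\inf_{|\alpha|=1}\|f-\alpha g\|\le\|f-g\|$ and $\|f-g\|^2=\|P_1(f-g)\|^2+\sum_{n\ge2}|a_n-b_n|^2$, where $P_{V_1^\perp}f=\sum_{n\ge2}a_ne_n$ and $P_{V_1^\perp}g=\sum_{n\ge2}b_ne_n$, it remains to bound these two contributions; throughout, membership in $\mathcal{B}$ gives $|a_n|,|b_n|\le\beta_{n+1}$ and $\|P_1f\|\ge1-\beta_2$.

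I would then split on the size of $\|P_1(f-g)\|$. If $\|P_1(f-g)\|^2\ge\vp$ (which also covers $g=0$), then $\|P_{V_1^\perp}(f-g)\|\le2\beta_2$ is negligible and $\|f-g\|^2\le(1+\vp)\|P_1(f-g)\|^2\le(1-\vp)^{-1}C\,\|\mathcal{A}_\Phi(f)-\mathcal{A}_\Phi(g)\|^2$, which is stronger than claimed. The main case is $\|P_1(f-g)\|^2<\vp$, where $\|P_1f\|,\|P_1g\|\in[\tfrac12,1]$: condition~(i) with $x=P_1f$, $y=P_1g$ produces $J'\subseteq J$ with $|J'|\ge\kappa^{-1}|J|$ and $|\langle P_1f,\varphi_j\rangle|,|\langle P_1g,\varphi_j\rangle|\in\big[\tfrac{c}{2\sqrt N},\tfrac1{c\sqrt N}\big]$ for all $j\in J'$. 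For such $j$ and $n\ge2$, $\langle f,\varphi_{j,n,1}\rangle=\alpha_n\langle P_1f,\varphi_j\rangle+(2|J|)^{-1/2}a_n$ and $\langle f,\varphi_{j,n,i}\rangle=\alpha_n\langle P_1f,\varphi_j\rangle+(2|J|)^{-1/2}ia_n$, and condition~(iii) ($\beta_n<2^{-6}c^2N^{-1/2}\alpha_n$) makes the $a_n$--summand negligible beside the $\varphi_j$--summand, so $|\langle f,\varphi_{j,n,1}\rangle|\le2\alpha_n|\langle P_1f,\varphi_j\rangle|\le\tfrac{2\alpha_n}{c\sqrt N}$, and the same for $g$ and for the $i$--variant.

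Expanding the squared moduli exactly, $\varphi_{j,n,1}$ contributes the cross term $\tfrac{2\alpha_n}{\sqrt{2|J|}}\RE\!\big(\langle P_1f,\varphi_j\rangle\overline{a_n}\big)$ and $\varphi_{j,n,i}$ contributes the corresponding imaginary part, with remainders $\alpha_n^2|\langle P_1f,\varphi_j\rangle|^2$ and $\tfrac1{2|J|}|a_n|^2$. Set $S_{j,n}:=\langle P_1f,\varphi_j\rangle\overline{a_n}-\langle P_1g,\varphi_j\rangle\overline{b_n}$. Multiplying $\big||\langle f,\varphi_{j,n,1}\rangle|-|\langle g,\varphi_{j,n,1}\rangle|\big|$ by $|\langle f,\varphi_{j,n,1}\rangle|+|\langle g,\varphi_{j,n,1}\rangle|\le\tfrac{4\alpha_n}{c\sqrt N}$, just as in the step producing \eqref{E:1}, and adding the $\varphi_{j,n,i}$--estimate recombines $|\RE S_{j,n}|^2+|\IM S_{j,n}|^2=|S_{j,n}|^2$ and gives a lower bound for $\big||\langle f,\varphi_{j,n,1}\rangle|-|\langle g,\varphi_{j,n,1}\rangle|\big|^2+\big||\langle f,\varphi_{j,n,i}\rangle|-|\langle g,\varphi_{j,n,i}\rangle|\big|^2$ comparable to $\tfrac{c^2N}{|J|}|S_{j,n}|^2$ minus the squared remainders; and $|S_{j,n}|\ge|\langle P_1f,\varphi_j\rangle|\,|a_n-b_n|-|\langle P_1(f-g),\varphi_j\rangle|\,|b_n|\ge\tfrac{c}{2\sqrt N}|a_n-b_n|-\beta_{n+1}\|f-g\|$ since $\|\varphi_j\|\le1$. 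Summing over $j\in J'$ (using $|J'|\ge\kappa^{-1}|J|$ on the main term and $|J'|\le|J|$ on the remainders) and over $n\ge2$, the main term yields a positive multiple of $\kappa^{-1}\|P_{V_1^\perp}(f-g)\|^2$ (the multiple a power of $c$); the remainder from $|\langle P_1f,\varphi_j\rangle|^2-|\langle P_1g,\varphi_j\rangle|^2$ is $\lesssim|J|\big(\sum_{n\ge2}\alpha_n^2\big)\|f-g\|^2$; the remainder from $|a_n|^2-|b_n|^2$ is governed by $\sum_n\beta_n^2/\alpha_n^2$ and is negligible; and $\sum_n\beta_{n+1}|a_n-b_n|\,\|f-g\|\le\big(\sum_n\beta_n^2\big)^{1/2}\|f-g\|^2$ by Cauchy--Schwarz, small because (iii) forces $\sum_n\beta_n^2\le2^{-12}c^4N^{-1}\sum_n\alpha_n^2$. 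The explicit numbers in~(ii) and~(iii) are chosen precisely so that these remainders together amount to at most an $\vp$--fraction of $\|f-g\|^2$; the surviving main term, added to $\sum_{j\in I}\big||\langle f,\psi_j\rangle|-|\langle g,\psi_j\rangle|\big|^2\ge C^{-1}\|P_1(f-g)\|^2$ and using $\min\big(C^{-1},\tfrac{c^2}{64\kappa}\big)=\max(C,64\kappa c^{-2})^{-1}$, gives $\|\mathcal{A}_\Phi(f)-\mathcal{A}_\Phi(g)\|^2\ge(1-\vp)\max(C,64\kappa c^{-2})^{-1}\|f-g\|^2$, the asserted estimate.

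Finally, for the frame bounds: $\Phi$ differs from $(\psi_j)_{j\in I}\cup\big((2|J|)^{-1/2}e_n,(2|J|)^{-1/2}ie_n\big)_{j\in J,\,n\ge2}$ by adding the vectors $\alpha_n\varphi_j$, and since $\psi_j\in V_1$, $e_n\perp V_1$, and the tail family is a Parseval frame of $V_1^\perp$ (index $n$ appears $|J|$ times with weight $(2|J|)^{-1/2}$, and $|\langle x,ie_n\rangle|=|\langle x,e_n\rangle|$), the unperturbed family is a frame for $\mathcal{H}$ with bounds $A$ and $1$; as $\sum_{j\in J,n\ge2}2\|\alpha_n\varphi_j\|^2\le2|J|\sum_{n\ge2}\alpha_n^2\le\vp$ by~(ii), the perturbation theorem of \cite{Christensen} yields lower bound $(1-\sqrt{\vp/A})^2A$ and upper bound $(1+\sqrt\vp)^2$. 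The main obstacle is the error bookkeeping: one must arrange simultaneously that $\beta_n$ is small enough relative to $\alpha_n$ (with a margin involving $c$) for the phase of $\langle f,\varphi_{j,n,\cdot}\rangle$ to be dictated by the $V_1$--component — which is why $c$ occurs in~(iii) though it is absent from the final constant — and that $\sum_{n\ge2}\alpha_n^2$ is small enough relative to $\vp,|J|,c,C^{-1},\kappa^{-1}c^2$ for the accumulated remainders, chiefly the one of size $|J|\sum\alpha_n^2\,\|f-g\|^2$, to be swallowed while the surviving cross term still carries the clean constant $\max(C,64\kappa c^{-2})$. Ensuring that $S_{j,n}$ genuinely dominates after summation over $J'$ and $n$, rather than being overwhelmed by the $\alpha_n^2(|\langle P_1f,\varphi_j\rangle|^2-|\langle P_1g,\varphi_j\rangle|^2)$ remainder, is the crux, and this is precisely what conditions~(i)--(iii) are engineered to guarantee.
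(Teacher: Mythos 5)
Your proposal is correct and follows essentially the same route as the paper's proof: reduce the $V_1$--component via the $(\psi_j)_{j\in I}$ stability hypothesis, and for the tail multiply the difference of moduli by the sum of moduli (bounded by a multiple of $\alpha_n c^{-1}N^{-1/2}$ on the good index set from condition (i)), recombine the $\varphi_{j,n,1}$ and $\varphi_{j,n,i}$ measurements to recover $|\langle P_1f,\varphi_j\rangle \overline{a_n}-\langle P_1g,\varphi_j\rangle \overline{b_n}|^2$, absorb the error terms using (ii)--(iii), and get the frame bounds by viewing $\Phi$ as a small perturbation of $(\psi_j)_{j\in I}$ together with the Parseval system built from the $e_n$'s. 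Your minor deviations --- the initial case split on $\|P_1(f-g)\|^2\gtrless\vp$ (as in the real multidimensional theorem) and working directly with the complex cross terms instead of the paper's phase factors $\delta_j$ --- are cosmetic and, if anything, streamline the argument.
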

\begin{remark}
 Note that Frames $(\varphi_j)_{j\in J}$ verifying the hypotheses of \Cref{T:RMD} are abundant and one may compare the conditions in \Cref{T:CMD} with \cite[Theorem 5.6]{FOTP}. Further, as far was we know, every frame of random vectors that has been proven to do uniformly stable phase retrieval with high probability will also satisfy the hypotheses of \Cref{T:RMD} as for example in \cite{MR3260258,MR3069958,MR3175089,MR3746047,KS}.
\end{remark}
\begin{proof}
For simplicity of notation, we let $K=2|J|$.  
Let $f,g\in\mathcal{B}$ with $\|f\|=1$ and $\|g\|\leq 1$. For simplicity, we denote $a_n=\langle f,e_n\rangle$ and $b_n=\langle g,e_n\rangle$ for all $n\geq2$.
 We may assume without loss of generality that $\langle P_{V_1} f, P_{V_1}g\rangle\geq 0$ and $\langle f,\varphi_j\rangle>0$ for all $j\in J$. 
 This implies that
\begin{equation*}\label{E:fgC}
\|P_{V_1}f-P_{V_1}g\|^2=\inf_{|\alpha|=1}\|P_{V_1}f-\alpha P_{V_1}g\|^2\leq C
\sum_{j\in I}\big||\langle P_{V_1}f,\psi_j\rangle|-|\langle P_{V_1}g,\psi_j\rangle|\big|^2.
\end{equation*}
We denote $J_c$ as the set
$$J_c=\big\{j\in J: c^{-1}\geq \sqrt{N}|\langle \|P_1 f\|^{-1}f,\varphi_j\rangle|\geq c\textrm{ and } c^{-1}\geq \sqrt{N}|\langle \|P_1 g\|g,\varphi_j\rangle|\geq c\big\}.$$

For each $j\in J_c$ we denote $\delta_{j}=\textrm{phase} \overline{\langle g,\varphi_j\rangle}$.   That is, $\delta_j\langle g,\varphi_j\rangle=|\langle g,\varphi_j\rangle|$.  
We now consider a fixed $j\in J_c$ and $n\geq 2$.
It is easy to see that
\begin{align*}
|\langle f,\varphi_{j,n,1}\rangle|^2&=|\alpha_n \langle f,\varphi_j\rangle +K^{-1/2}a_n|^2\\
&=(\alpha_n \langle f,\varphi_j\rangle+K^{-1/2}\Re a_n)^2+(K^{-1/2}\Im a_n)^2\\
&=\alpha^2_n \langle f,\varphi_j\rangle^2+2K^{-1/2}\alpha_n \langle f,\varphi_j\rangle\Re a_n+K^{-1}(\Re a_n)^2+K^{-1}(\Im a_n)^2\\
&=\alpha^2_n \langle f,\varphi_j\rangle^2+2K^{-1/2}\alpha_n \langle f,\varphi_j\rangle\Re a_n+K^{-1}|a_n|^2.
\end{align*}
Likewise, 
\begin{align*}
|\langle f,\varphi_{j,n,i}\rangle|^2&=|\alpha_n \langle f,\varphi_j\rangle -K^{-1/2}a_n i|^2\\
&=(\alpha_n \langle f,\varphi_j\rangle+K^{-1/2}\Im a_n)^2+(K^{-1/2}\Re a_n)^2\\
&=\alpha^2_n \langle f,\varphi_j\rangle^2+2K^{-1/2}\alpha_n \langle f,\varphi_j\rangle\Im a_n+K^{-1}(\Im a_n)^2+K^{-1}(\Re a_n)^2\\
&=\alpha^2_n \langle f,\varphi_j\rangle^2+2K^{-1/2}\alpha_n \langle f,\varphi_j\rangle\Im a_n+K^{-1}|a_n|^2.
\end{align*}
Similarly, we have for $g$ that
\begin{align*}
|\langle g,\varphi_{j,n,1}\rangle|^2
&=|\alpha_n \langle g,\varphi_j\rangle +K^{-1/2} b_n|^2\\
&=|\alpha_n \delta_j \langle g,\varphi_j\rangle +K^{-1/2}\delta_j b_n|^2\\
&=\alpha^2_n |\langle g,\varphi_j\rangle|^2+2K^{-1/2}\alpha_n |\langle g,\varphi_j\rangle|\Re (\delta_j b_n)+K^{-1}|b_n|^2
\end{align*}
and
$$
|\langle g,\varphi_{j,n,i}\rangle|^2
=\alpha^2_n |\langle g,\varphi_j\rangle|^2+2K^{-1/2}\alpha_n |\langle g,\varphi_j\rangle|\Im(\delta_j b_n)+K^{-1}|b_n|^2.
$$
For the case of $\varphi_{j,n,1}$, we observe the identity
\begin{equation}\label{E:diff2}
\begin{split}
\big||\langle f,\varphi_{j,n,1}\rangle|-|\langle g,\varphi_{j,n,1}\rangle|\big|&=\big|(\alpha^2_n |\langle f,\varphi_j\rangle|^2+2\alpha_n K^{-1/2} |\langle f,\varphi_j\rangle|\Re a_n+K^{-1}|a_n|^2)^{1/2}
\\
&-(\alpha^2_n |\langle g,\varphi_j\rangle|^2+2\alpha_n K^{-1/2}|\langle g,\varphi_j\rangle|\Re (\delta_jb_n)+K^{-1}|b_n|^2)^{1/2}\big|.
\end{split}
\end{equation}
We have that $c^{-1}N^{-1/2}\geq \langle \|P_1 f\|^{-1}f,\varphi_j\rangle\geq cN^{-1/2}$ and hence $c^{-1}N^{-1/2}\geq \langle f,\varphi_j\rangle\geq 2^{-1}cN^{-1/2}$. It follows by (iii) that $|a_n|\leq \beta_{n+1}\leq 2^{-1}cN^{-1/2}\alpha_n\leq \alpha_n\langle f,\varphi_j\rangle$.    Thus,  we also have 
$$2\alpha_n c^{-1}N^{-1/2}\geq (\alpha^2_n | \langle f,\varphi_j\rangle|^2+2\alpha_n | \langle f,\varphi_j\rangle|\Re a_n+|a_n|^2)^{1/2}.$$
Summing this inequality with the corresponding one for $g$ gives that
\begin{equation}\label{E:alpha2}
\begin{split}
4\alpha_n c^{-1}N^{-1/2}\geq  (&\alpha^2_n |\langle f,\varphi_j\rangle|^2+2\alpha_n K^{-1/2}|\langle f,\varphi_j\rangle|\Re a_n
+K^{-1}|a_n|^2)^{1/2}
\\
+(&\alpha^2_n |\langle g,\varphi_j\rangle|^2+2\alpha_n K^{-1/2} |\langle g,\varphi_j\rangle|\Re (\delta_j b_n)+K^{-1}|b_n|^2)^{1/2}.
\end{split}
\end{equation}
By multiplying \eqref{E:diff2} and \eqref{E:alpha2}, we obtain
\begin{equation*}
\begin{split}
4\alpha_n c^{-1}N^{-1/2}\big||\langle f,\varphi_{j,n,1}\rangle|-|\langle g,\varphi_{j,n,1}\rangle|\big|\geq \big|&\alpha^2_n |\langle f,\varphi_j\rangle|^2+2\alpha_n |\langle f,\varphi_j\rangle|\Re a_n+|a_n|^2
\\
-(&\alpha^2_n |\langle g,\varphi_j\rangle|^2+2\alpha_n |\langle g,\varphi_j\rangle|\Re (\delta_j b_n)+|b_n|^2)\big|.
\end{split}
\end{equation*}
Thus, we have that
\begin{equation*}\label{E:1C1}
\begin{split}
16  c^{-2}N^{-1}\big||\langle f,\varphi_{j,n,1}\rangle|-|\langle g,\varphi_{j,n,1}\rangle|\big|^2&\geq \big|\alpha_n (|\langle f,\varphi_j\rangle|^2-|\langle g,\varphi_j\rangle|^2)+\alpha_n^{-1}K^{-1}(|a_n|^2-|b_n|^2)
\\
&+2K^{-1/2}(|\langle f,\varphi_j\rangle|\Re a_n-|\langle g,\varphi_j\rangle|\Re (\delta_jb_n))\big|^2.
\end{split}
\end{equation*}
Using the inequality of real numbers $|a+b+c|^2\geq 4^{-1}c^2-4b^2-4a^2$, we may bound
\begin{equation}\label{E:1C2}
\begin{split}
16c^{-2}N^{-1}\big||\langle f,\varphi_{j,n,1}\rangle|-|\langle g,\varphi_{j,n,1}\rangle|\big|^2&\geq 
K^{-1}\big||\langle f,\varphi_j\rangle|\Re a_n-|\langle g,\varphi_j\rangle|\Re (\delta_jb_n)\big|^2
\\
&-4\alpha_n^2 \big||\langle f,\varphi_j\rangle|^2-|\langle g,\varphi_j\rangle|^2\big|^2-4\alpha_n^{-2}K^{-2}\big||a_n|^2-|b_n|^2\big|^2.
\end{split}
\end{equation}
Note that $cN^{-1/2}\leq |\langle f,\varphi_j\rangle|,|\langle g,\varphi_j\rangle|\leq c^{-1}N^{-1/2}$ and $0\leq |a_n|,|b_n|\leq 2^{-1}\alpha_n^2$.
This allows us to derive the following inequalities:
\begin{equation}\label{E:21}
\big||\langle f,\varphi_j\rangle|^2-|\langle g,\varphi_j\rangle|^2\big|^2=(|\langle f,\varphi_j\rangle|+|\langle g,\varphi_j\rangle|)^2\big||\langle f,\varphi_j\rangle|-|\langle g,\varphi_j\rangle|\big|^2\leq 4c^{-2}N^{-1}\|f-g\|^2,
\end{equation}
\begin{equation}\label{E:31}
\big||a_n|^2-|b_n|^2\big|^2=(|a_n|+|b_n|)^2\big||a_n|-|b_n|\big|^2\leq \alpha_n^4\|f-g\|^2.
\end{equation}
By \eqref{E:1C2}, \eqref{E:21} and \eqref{E:31}  we deduce that 
\begin{align*}
16c^{-2}N^{-1}\big||\langle f,\varphi_{j,n,1}\rangle|&-|\langle g,\varphi_{j,n,1}\rangle|\big|^2\\
&\geq K^{-1}\big||\langle f,\varphi_j\rangle|\Re a_n-|\langle g,\varphi_j\rangle|\Re (\delta_j b_n)\big|^2-16\alpha_n^2\|f-g\|^2(4c^{-2}N^{-1}+K^{-2})\\
&\geq K^{-1}\big||\langle f,\varphi_j\rangle|\Re a_n-|\langle g,\varphi_j\rangle|\Re (\delta_j b_n)\big|^2
-16\alpha_n^2\|f-g\|^2(4c^{-2}N^{-1}+c^{-2}N^{-1})\\
&= K^{-1}\big|\Re(\langle f,\varphi_j\rangle a_n)-\Re(\overline{\langle g,\varphi_j\rangle} b_n)\big|^2
-80c^2N^{-1}\alpha_n^2\|f-g\|^2\\
&= K^{-1}\big|\Re(\langle f,\varphi_j\rangle a_n-\overline{\langle g,\varphi_j\rangle} b_n)\big|^2
-80c^2N^{-1}\alpha_n^2\|f-g\|^2.
\end{align*}
The same argument with $\varphi_{j,n,i}$ instead of $\varphi_{j,n,1}$ leads to the inequality
$$16c^{-2}N^{-1}\big||\langle f,\varphi_{j,n,i}\rangle|-|\langle g,\varphi_{j,n,i}\rangle|\big|^2
\geq K^{-1}\big|\Im(\langle f,\varphi_j\rangle a_n-\overline{\langle g,\varphi_j\rangle} b_n)\big|^2
-80c^2N^{-1}\alpha_n^2\|f-g\|^2.
$$
Summing the above inequalities gives that
\begin{align*}
16c^{-2}N^{-1}&\big(\big||\langle f,\varphi_{j,n,1}\rangle|-|\langle g,\varphi_{j,n,1}\rangle|\big|^2+\big||\langle f,\varphi_{j,n,i}\rangle|-|\langle g,\varphi_{j,n,i}\rangle|\big|^2)\\
&\geq K^{-1}\big|\langle f,\varphi_j\rangle a_n-\overline{\langle g,\varphi_j\rangle} b_n\big|^2
-160c^2N^{-1}\alpha_n^2\|f-g\|^2\\
&= K^{-1}\big|\langle f,\varphi_j\rangle (a_n-b_n)+ (\langle f,\varphi_j\rangle-\overline{\langle g,\varphi_j\rangle}) b_n\big|^2
-160c^2N^{-1}\alpha_n^2\|f-g\|^2\\
&\geq K^{-1}4^{-1}|\langle f,\varphi_j\rangle|^2 |a_n-b_n|^2- K^{-1}|\langle f,\varphi_j\rangle-\overline{\langle g,\varphi_j\rangle}|^2 |b_n|^2
-160c^2N^{-1}\alpha_n^2\|f-g\|^2\\
&\geq K^{-1}4^{-1}c^2 N^{-1} |a_n-b_n|^2- |\langle f,\varphi_j\rangle-\langle g,\varphi_j\rangle|^2 \beta_n^2
-160c^2N^{-1}\alpha_n^2\|f-g\|^2\\
&\geq K^{-1}4^{-1}c^2 N^{-1}|a_n-b_n|^2- \|f-g\|^2\beta_n^2
-160c^2N^{-1}\alpha_n^2\|f-g\|^2\\
&\geq K^{-1}4^{-1}c^2 N^{-1}|a_n-b_n|^2-161 c^2N^{-1}\alpha_n^2\|f-g\|^2.
\end{align*}

Simplifying the above inequality yields,
$$\big||\langle f,\varphi_{j,n,1}\rangle|-|\langle g,\varphi_{j,n,1}\rangle|\big|^2+\big||\langle f,\varphi_{j,n,i}\rangle|-|\langle g,\varphi_{j,n,i}\rangle|\big|^2\geq K^{-1}64^{-1}c^4|a_n-b_n|^2-11 c^4 \alpha_n^2 \|f-g\|^2.
$$

Hence, 
\begin{align*}
\|\mathcal{A}_\Phi(f)-\mathcal{A}_\Phi(g)\|^2&=\sum_{j\in I}\big||\langle f,\psi_{j}\rangle|-|\langle g,\psi_j\rangle|\big|^2
\\
&\hspace{.7cm}+\sum_{j\in J}\sum_{n=2}^\infty\left( \big||\langle f,\varphi_{j,n,1}\rangle|-|\langle g,\varphi_{j,n,1}\rangle|\big|^2+ \big||\langle f,\varphi_{n,i}\rangle|-|\langle g,\varphi_{n,i}\rangle|\big|^2\right)\\
&\geq C^{-1}\|P_{V_1}f-P_{V_1}g\|^2+\sum_{j\in J_c}\sum_{n=2}^\infty K^{-1}64^{-1}c^4|a_n-b_n|^2-11 c^4 \alpha_n^2 \|f-g\|^2\\
&\geq C^{-1}\|P_{V_1}f-P_{V_1}g\|^2+|J_c||J^{-1}64^{-1}c^2\|P_{V_1^\perp}f-P_{V_1^\perp}g\|^2-11|J_c|c^2(\sum_{n\geq 2}\alpha_n^2)\|f-g\|^2\\
&\geq \min(C^{-1}, \kappa^{-1}64^{-1}c^2)\|f-g\|^2-11|J_c|c^2(\sum_{n\geq 2}\alpha_n^2)\|f-g\|^2\\
&\geq (1-\vp)\min(C^{-1}, \kappa^{-1}64^{-1}c^2)\|f-g\|^2.
\end{align*}
Note that the last inequality is obtained as $(\alpha_n)_{n=2}^\infty$ satisfies $\sum_{n\geq 2}\alpha_n^2\leq \vp\min( C^{-1}, \kappa^{-1}64^{-1}c^2)11^{-1}K^{-1}c^{-2}$.

Lastly, as in Theorem \ref{T:RMD} we have that $\Phi$ is an $\varepsilon$ perturbation of a frame with lower frame bound $A$ and upper frame bound  $1$.  Thus,  $\Phi$
 has lower frame bound $(1-\sqrt{\frac{\vp}{A}})^2A$ and upper frame bound $(1+\sqrt{\vp})^2$
\end{proof}

\subsection{Acknowledgments} We would like to thank Jaume de Dios Pont and Palina Salanevich for discussions related to the physical applications of the above phase retrieval models which then inspired the results in \Cref{S:M}.  

\bibliographystyle{plain}
\bibliography{refs.bib}

\end{document}